\newtheorem{theorem}{Theorem}
\newenvironment{proof}[1][Proof]{\begin{trivlist}
\item[\hskip \labelsep {\bfseries #1}]}{\end{trivlist}}
\newenvironment{example}[1][Example]{\begin{trivlist}
\item[\hskip \labelsep {\bfseries #1}]}{\end{trivlist}}
\DeclareMathOperator*{\argmin}{arg\,min}
\let\div\undefined
\DeclareMathOperator{\div}{div}
\DeclareMathOperator{\tandiv}{div_{\Gamma}}
\journal{arXiv}
\begin{document}

\begin{frontmatter}

\title{An Approximate Newton Smoothing Method for Shape Optimization}

\author[adressJonas]{Jonas Kusch\corref{mycorrespondingauthor}}
\cortext[mycorrespondingauthor]{Corresponding author}
\author[adressStephan]{Stephan Schmidt}
\author[adressNico]{Nicolas R. Gauger}

\address[adressJonas]{Karlsruhe Institute of Technology, Kaiserstr. 12, 76131 Karlsruhe, Germany}
\address[adressStephan]{Universit\"at W\"urzburg, Emil-Fischer-Str. 40, 97074 W\"urzburg, Germany}
\address[adressNico]{TU Kaiserslautern, Paul-Ehrlich-Str. 34, 67663 Kaiserslautern, Germany}

\begin{abstract}
A novel methodology to efficiently approximate the Hessian for numerical shape optimization is considered. The method enhances operator symbol approximations by including body fitted coordinates and spatially changing symbols in a semi automated framework based on local Fourier analysis. Contrary to classical operator symbol methods, the proposed strategy will identify areas in which a non-smooth design is physically meaningful and will automatically turn off smoothing in these regions. A new strategy to also numerically identify the analytic symbol is derived, extending the procedure to a wide variety of problems. The effectiveness is demonstrated by using drag minimization in Stokes and Navier--Stokes flows.

\end{abstract}

\begin{keyword}
shape optimization, smoothing, Hessian, Stokes, Navier-Stokes
\end{keyword}

\end{frontmatter}


\section{Introduction}
\label{sec:section1}

Shape optimization subject to partial differential equations plays an important role in a variety of problems such as minimum drag shapes in fluid dynamics, acoustics, material sciences or geometric inverse problems in non-destructive testing and medical imaging. 

In order to propose an efficient design, an initial geometry is described with the help of a finite set of parameters, which are modified such that a given cost function is minimized. Steepest-descend methods iteratively modify the design according to the negative gradient with respect to the chosen parameters, thereby ensuring an successive descend of the cost function. The use of the adjoint approach~\cite{jameson1988aerodynamic} makes computing the gradient independent of the number of design parameters, promoting using all mesh node positions as design parameters, i.e., the richest design space possible. A downside of the plethora of design variables are possibly high-frequency oscillations in the search direction as no boundary smoothness is inherent in the parametrization. Consequently, resulting non-smooth designs can cause an irregular computational mesh resulting in the failure of the optimization. 

A natural choice to overcome these difficulties is using a smoothed~\cite{jameson1988aerodynamic,jameson1990automatic,jameson1994optimum} or, equivalently, a Sobolev gradient descend~\cite{renka2006simple}, which necessitates a manual or automatic parameter study to determine a problem-dependent smoothing parameter \cite{kim2005enhancement}. Picking an adequate smoothing parameter is a crucial task as manipulating the search direction poorly can potentially slow down the convergence. 

Additionally, the convergence speed of the steepest-Descent method deteriorates if the Hessian of the optimization problem is ill-conditioned~\cite[Chapter~3.3]{wright1999numerical}. An approach to overcome the condition number dependency is Newton's method, necessitating a computation of the Hessian. As it is computationally not feasible to determine the exact Hessian, a lot of work has gone into its approximation. One strategy is to approximate the symbol of the exact operator \cite{arian1995analysis,arian1999analysis,arian1999preconditioning}. Furthermore, closely related studies based on Fourier analysis have also been used in~\cite{yang2011shape} to study the condition of several Navier--Stokes flow situations, bridging the gap between optimization acceleration and studying the well- or ill-posed nature of flow problems.

An approach to construct a search direction fulfilling both, the desired regularity of the design as well as including Hessian information has been derived in  \cite{schmidt2009impulse} for energy minimization. The authors use the symbol of the exact Hessian for a half-space geometry to choose a constant parameter for Sobolev smoothing which is based on the spacing of the computational mesh. The choice of a constant smoothing parameter can however lead to a limitation of the design space, as non-smooth areas can become physically meaningful in certain areas of the design. One can for example think of the sharp trailing edge of an airfoil. Furthermore, the limitation to half-space geometries means that the smoothing parameter might not be valid in practical applications. 

The aim of this paper is to extend the derivation of the Hessian symbol to body fitted coordinates, allowing the direct usage of the symbol in the construction of an approximate Newton method. The resulting preconditioner will inherit the local smoothing properties of the exact Hessian by picking spatially dependent smoothing parameters automatically. 
The paper is structured as follows: After the introduction in section \ref{sec:section1}, we derive the steepest-descent search direction in section \ref{sec:section2}. To accelerate the optimization, we derive the Hessian symbol in section \ref{sec:section3} and discuss why the Hessian has smoothing behavior. This behavior is demonstrated and investigated further in section \ref{sec:section4}, where a numerical approximation of the Hessian symbol for low Reynolds number flows is presented. The considered technique to approximate this symbol gives insight into the smoothing properties of the Hessian. Having derived the analytic symbol, we construct a preconditioner, which approximates this symbol in section \ref{sec:section5}. To ensure minimal computational costs, we use differential operators to construct said preconditioner. The coefficients of these operators are determined automatically to match the exact Hessian symbol. Finally, we compare our novel method in section \ref{sec:section6} to classical Sobolev smoothing when using a constant smoothing parameter.

\section{The steepest-descent search direction}
\label{sec:section2}
We start by defining the shape optimization problem for a general cost function $F$ given a Stokes flow: This constraint optimization problem takes the form
\begin{subequations}\label{eq:optProblem}
\begin{align}
\min_{\bm{v},p,\Gamma_{o}}F(\bm{v},p,&\Gamma_{o}) \\
\text{s.t. }
-\mu \Delta \bm{v} + \nabla p &= 0,\\
\nabla \cdot \bm{v} &= 0,\\
\bm{v} = 0 &\text{  on } \Gamma_{o},
\end{align}
\end{subequations}
where the design variable $\Gamma_{o}$ is the surface of a flow obstacle with volume $\Omega_o$. In our setting, the flow domain is given by $\Omega = \mathbb{R}^d\setminus \Omega_o$. The velocity $\bm{v}\in\mathbb{R}^2$ and the pressure $p\in\mathbb{R}$ fulfill the Stokes equations with dynamic viscosity $\mu$. Physically, the Stokes equations describe a creeping flow, in which convective forces are negligible compared to viscous forces. We are interested in the minimization of the obstacle's drag, which is given by
\begin{equation}\label{eq:drag}
F_{D} := \int_{\Gamma_{o}}-\mu (\bm{n}\cdot\nabla)\bm{v} \cdot \bm{a}+p\bm{n}\cdot\bm{a}d\Gamma,
\end{equation}
where $\bm{a}$ is defined as
\begin{align*}
\bm{a}:=(\cos(\phi),\sin(\phi))^T.
\end{align*} 
The angle of attack $\phi$ will be zero in our case. Making use of gradient-based methods, the design variable $\Gamma_o$ is modified iteratively such that the cost function $F_D$ is minimized. In order to calculate the gradient of the optimization problem \eqref{eq:optProblem}, the shape derivative of the cost function needs to be determined. As proposed in \cite{ZolesioSokolowski,DelfourZolesio2}, the shape derivative can be computed by defining a mapping $T_t[\bm{V}]$ which maps the original domain $\Omega$ to the deformed domain $\Omega_t$, given by
\begin{align*}
T_t[\bm{V}](\bm{x}) = \bm{x}+t\bm{V}(\bm{x}).
\end{align*}
The shape derivative of the cost function $F$ in the direction of the vector field $\bm{V}$ is then given by
\begin{align*}
dF(\Omega)[\bm{V}]:=\left.\frac{d}{dt} \right\vert_{t=0}F(T_t[\bm{V}](\Omega)).
\end{align*} 
To efficiently calculate the shape derivative, one often makes use of the adjoint approach, which yields the following theorem:
\begin{theorem}
The shape derivative of problem \eqref{eq:optProblem} with respect to $\bm{V}$ using the drag \eqref{eq:drag} as cost function is
\begin{align*}
dF_{D}(\bm{v},p,\Omega)[\bm{V}] = -\int_{\Gamma_{o}} (\bm{V}\cdot \bm{n})\left[ \sum_{i=1}^d \mu (\bm{n}\cdot\nabla)\lambda_i (\bm{n}\cdot\nabla)v_i \right] d\Gamma
\end{align*}
where the adjoint velocity $\bm{\lambda}\in\mathbb{R}^d$ is given by
\begin{align*}
-\mu \Delta \bm{\lambda}- \nabla \lambda_p &= \bm{0}, \\
\div{\bm{\lambda}} &= 0,\\
\bm{\lambda} = -\bm{a} &\text{  on } \Gamma_{o}
\end{align*}
and $\bm{n}$ is the normal vector of the optimization patch $\Gamma_o$.
\end{theorem}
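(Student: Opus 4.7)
My plan is to use the standard adjoint Lagrangian approach for PDE-constrained shape optimization. Introduce adjoint variables $\bm{\lambda}$ (momentum multiplier) and $\lambda_p$ (continuity multiplier) and define
\begin{equation*}
L(\bm{v},p,\bm{\lambda},\lambda_p,\Omega) = F_D(\bm{v},p,\Gamma_o) + \int_\Omega \bm{\lambda}\cdot(-\mu\Delta\bm{v}+\nabla p)\,d\bm{x} + \int_\Omega \lambda_p\,\nabla\cdot\bm{v}\,d\bm{x}.
\end{equation*}
On any admissible state $L = F_D$, so differentiating along $T_t[\bm{V}]$ produces a contribution from the material derivatives of $\bm{v}$ and $p$ together with a purely geometric term. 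I select $(\bm{\lambda},\lambda_p)$ so that the partial derivatives $\partial_{\bm v}L$ and $\partial_p L$ vanish at the primal solution; the unknown state variations then drop out of $dF_D[\bm{V}]$, leaving an explicit boundary integral against $\bm{V}\cdot\bm{n}$.

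To derive the adjoint system, I compute $\partial_p L\cdot\delta p$ and integrate $\int_\Omega\bm{\lambda}\cdot\nabla\delta p\,d\bm{x}$ by parts; the interior contribution forces $\nabla\cdot\bm{\lambda}=0$, while the boundary piece combines with the pressure term of $F_D$ to give $\int_{\Gamma_o}(\bm{\lambda}+\bm{a})\cdot\bm{n}\,\delta p\,d\Gamma$, which vanishes for arbitrary $\delta p$ only if $(\bm{\lambda}+\bm{a})\cdot\bm{n}=0$ on $\Gamma_o$. For $\partial_{\bm v}L\cdot\delta\bm{v}$, integrating the viscous term by parts twice produces $-\mu\Delta\bm{\lambda}$ in the interior and the boundary terms $\mu\int_{\Gamma_o}[(\bm{n}\cdot\nabla)\bm{\lambda}\cdot\delta\bm{v} - \bm{\lambda}\cdot(\bm{n}\cdot\nabla)\delta\bm{v}]\,d\Gamma$. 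Since $\delta\bm{v}$ vanishes on $\Gamma_o$, only the second survives and combines with $\partial_{\bm v}F_D\cdot\delta\bm{v} = -\int_{\Gamma_o}\mu(\bm{n}\cdot\nabla)\delta\bm{v}\cdot\bm{a}\,d\Gamma$; requiring this to vanish for arbitrary Neumann trace $(\bm{n}\cdot\nabla)\delta\bm{v}$ forces $\bm{\lambda}=-\bm{a}$ on $\Gamma_o$, and the interior identity yields $-\mu\Delta\bm{\lambda}-\nabla\lambda_p=0$, matching the stated adjoint system.

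For the geometric shape derivative I apply the Reynolds and Hadamard transport formulas to the Lagrangian. Two identities on $\Gamma_o$ collapse the calculation dramatically. First, $\bm{v}\equiv 0$ on $\Gamma_o$ implies all tangential derivatives of $\bm{v}$ vanish there, so $\nabla v_i = \bm{n}\,(\bm{n}\cdot\nabla)v_i$, and the material derivative of the no-slip condition becomes $\bm{v}'[\bm{V}] = -(\bm{V}\cdot\bm{n})(\bm{n}\cdot\nabla)\bm{v}$. Second, since $\bm{\lambda}\equiv -\bm{a}$ is constant on $\Gamma_o$, the analogous identity $\nabla\lambda_i = \bm{n}\,(\bm{n}\cdot\nabla)\lambda_i$ holds. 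Using $\bm{\lambda}\cdot\bm{n}=-\bm{a}\cdot\bm{n}$ the pressure contribution of the moving boundary cancels, and the viscous boundary quadratic form $\nabla\bm{\lambda}:\nabla\bm{v}$ collapses to $\sum_i(\bm{n}\cdot\nabla)\lambda_i\,(\bm{n}\cdot\nabla)v_i$; gathering signs then yields the stated formula.

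The main obstacle I expect is the careful shape calculus on the boundary integrals: differentiating $\int_{\Gamma_{o,t}} g\,d\Gamma$ produces a material-derivative term together with curvature contributions of the form $(\bm{V}\cdot\bm{n})(\partial_n g + \kappa g)$, which must be tracked consistently with the interior integration-by-parts boundary terms and with the material derivative of the no-slip condition. Verifying that all such contributions either cancel against each other or collapse via the two tangential-gradient identities above, leaving exactly the compact form in the theorem, is the delicate bookkeeping step. A secondary subtlety is imposing enough decay of $(\bm{v},p,\bm{\lambda},\lambda_p)$ at infinity so that the far-field boundary contributions of the unbounded exterior domain $\Omega=\mathbb{R}^d\setminus\Omega_o$ vanish, leaving only the integral over $\Gamma_o$.
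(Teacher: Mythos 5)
Your proposal is correct and follows essentially the same route as the paper: it is the standard adjoint construction (your Lagrangian formulation is the same computation as the paper's device of adding the adjoint-weighted linearized Stokes residual to the Hadamard boundary-integral shape derivative), yielding the identical adjoint system, the identical boundary conditions $\bm{\lambda}=-\bm{a}$ and $\bm{n}\cdot\bm{\lambda}=-\bm{n}\cdot\bm{a}$ from the coefficients of the Neumann and pressure traces, and the same final collapse via $\nabla v_i=\bm{n}\,(\bm{n}\cdot\nabla)v_i$ and the cancellation of the curvature and tangential-divergence terms against the primal Stokes equations. The one point to keep explicit when you execute the plan is that, although admissible state variations $\delta\bm{v}$ vanish on $\Gamma_o$ for the purpose of deriving the adjoint system, the local shape derivative $\bm{v}'[\bm{V}]=-(\bm{V}\cdot\bm{n})(\bm{n}\cdot\nabla)\bm{v}$ does not, and it is precisely the surviving boundary term $\mu\int_{\Gamma_o}(\bm{n}\cdot\nabla)\bm{\lambda}\cdot\bm{v}'[\bm{V}]\,d\Gamma$ that produces the stated formula.
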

\begin{proof}
We start by calculating the shape derivative for a general cost function
\begin{align*}
F(\bm{v},p,\Gamma_{o}) = \int_{\Gamma_{o}}f(\bm{v},(\bm{n}\cdot\nabla)\bm{v},p,\bm{n})d\Gamma.
\end{align*}
Following \cite{Schmidt10}, the shape derivative of this cost function is
\begin{align*}
dF(\Omega)[\bm{V}]=&\int_{\Gamma_o}(\bm{V}\cdot\bm{n})\left[ ( \bm{n}\cdot\nabla ) f +\kappa f\right] + \frac{\partial f}{\partial \bm{n}} d\bm{n}[\bm{V}] d\Gamma\nonumber \\
&+\int_{\Gamma_o} \frac{\partial f}{\partial v_i} v_i'[\bm{V}] + \frac{\partial f}{\partial b_i}(\bm{n}\cdot\bm{\nabla})v_i'[\bm{V}]+\frac{\partial f}{\partial p}p'[\bm{V}] d\Gamma
\end{align*}
where $\bm{b}:=(\bm{n}\cdot\nabla)\bm{v}$, the curvature is denoted by $\kappa$ and the tangential divergence of a vector field $\bm{W}$ is given by
\begin{align*}
\tandiv(\bm{W}) = \div(\bm{W}) - (\bm{n}\cdot \nabla)\bm{W} \cdot \bm{n} = \partial_{x_j}W_j-n_k \partial_{x_k}W_jn_j.
\end{align*}
Furthermore, the normal derivative $( \bm{n}\cdot\nabla )$ is only applied to the first three inputs of $f$, namely $\bm{v},\bm{b}$ and $p$. The gradient can be rewritten as
\begin{align}\label{eq:GradientBoundary}
dF(\Omega)[\bm{V}]=&\int_{\Gamma_o}(\bm{V}\cdot\bm{n})\left[ ( \bm{n}\cdot\nabla ) f +\kappa \left(f- \bm{n}\cdot \frac{\partial f}{\partial \bm{n}}\right) + \tandiv{\frac{\partial f}{\partial \bm{n}}}\right]d\Gamma \nonumber \\
&+\int_{\Gamma_o} \frac{\partial f}{\partial v_i} v_i'[\bm{V}] + \frac{\partial f}{\partial b_i}(\bm{n}\cdot\bm{\nabla})v_i'[\bm{V}]+\frac{\partial f}{\partial p}p'[\bm{V}] d\Gamma.
\end{align}
The local shape derivative of the velocity and pressure due to a perturbation $\bm{V}$ is denoted by $\bm{v}'[\bm{V}]$ and $p'[\bm{V}]$. Computing these two functions is numerically expensive, which is why we aim at finding a representation of the shape derivative, independent of these two terms. The functions $\bm{v}'[\bm{V}]$ and $p'[\bm{V}]$ are determined by linearizing the Stokes equations around the primal state $\bm{v}$ and $p$, meaning that we write down the Stokes equation for the states of the perturbed geometry with $\bm{\tilde{v}} = \bm{v}+\bm{v'}$ and $\tilde{p}=p+p'$. This yields
\begin{subequations}\label{eq:linStokes}
\begin{align}
-\mu \Delta \bm{v}'[\bm{V}] + \nabla p'[\bm{V}] &= 0,\\
\nabla \cdot \bm{v}'[\bm{V}] &= 0,\\
\bm{v}'[\bm{V}] = -(\bm{n}\cdot\nabla)\bm{v}(\bm{n}&\cdot\bm{V}) \text{ on } \Gamma_{o}. 
\end{align}
\end{subequations}
The derivation of the boundary condition can be performed by a Taylor expansion. For more details see \cite{Schmidt10}. In order to eliminate the velocity and pressure perturbations $\bm{v}'$ and $p'$ in the shape derivative \eqref{eq:GradientBoundary}, one chooses the adjoint ansatz. We start by taking the integral of the scalar product of the linearized Stokes equations \eqref{eq:linStokes} and the adjoint states $(\bm{\lambda},\lambda_p)^T$, where $\bm{\lambda}\in\mathbb{R}^d$ is the adjoint velocity and $\lambda_p$ is the adjoint pressure, leading to
\begin{equation}\label{eq:adjointPart}
0 = \int_{\Omega} -\lambda_k \mu \partial_{x_j x_j} v_k' + \lambda_k \partial_{x_k}p'+\lambda_p \partial_{x_k}v_k' d\Omega.
\end{equation}
Let us look at each term of the adjoint part individually. We start with
\begin{align*}
\int_{\Omega} -\lambda_k \mu \partial_{x_j x_j} v_k' d\Omega =& \int_{\Omega} -\mu \partial_{x_j}(\lambda_k \partial_{x_j} v_k' )+\mu\partial_{x_j}\lambda_k \partial_{x_j}v_k' d\Omega \\
=&\int_{\Gamma_o}-\mu n_j \lambda_k\partial_{x_j}v_k'd\Gamma+\int_{\Omega}\mu \partial_{x_j}\left( v_k'\partial_{x_j}\lambda_k\right)-\mu v_k'\partial_{x_j x_j}\lambda_k d\Omega\\
=&\int_{\Gamma_o}-n_j \mu \lambda_k\partial_{x_j}v_k'd\Gamma+\int_{\Gamma_o} \mu n_j v_k'\partial_{x_j}\lambda_k-\int_{\Omega}\mu v_k'\partial_{x_j x_j} \lambda_k d\Omega.
\end{align*}
Here, we used the reverse chain rule as well as Gauss divergence theorem. The remaining two terms can be transformed with the same strategy. We obtain
\begin{align*}
\int_{\Omega}\lambda_k \partial_{x_k} p' d\Omega = -\int_{\Omega} p'\partial_{x_k} \lambda_k d\Omega + \int_{\Gamma_o}n_k \lambda_k p' d\Gamma, \\ 
\int_{\Omega} \lambda_p \partial_{x_k}v_k'd\Omega = - \int_{\Omega}v_k'\partial_{x_k}\lambda_p d\Omega+\int_{\Gamma_o}\lambda_p v_k' n_k d\Gamma .
\end{align*}
Adding the transformed equation \eqref{eq:adjointPart} to the gradient \eqref{eq:GradientBoundary}, we get
\begin{align}\label{eq:GradientBoundaryAdj}
dF(\Omega)[\bm{V}]+0 =&\int_{\Gamma_o}V_l n_l\left[  n_j\partial_{x_j}  f +\kappa \left(f- n_j\frac{\partial f}{\partial n_j}\right) + \tandiv{\frac{\partial f}{\partial n_j}}\right] d\Gamma \nonumber \\
&+\int_{\Gamma_o} \frac{\partial f}{\partial v_j} v_j' + \frac{\partial f}{\partial b_j}(n_k\partial_{x_k})v_j'+\frac{\partial f}{\partial p}p' d\Gamma \nonumber \\
&+\int_{\Gamma_o}- \mu \lambda_j n_k\partial_{x_k}v_j'+\mu n_k v_j'\partial_{x_k}\lambda_j+n_j \lambda_j p'+\lambda_p v_j' n_j d\Gamma \nonumber \\ 
&+\int_{\Omega}-\mu v_k'\partial_{x_j x_j} \lambda_k - p'\partial_{x_k} \lambda_k - v_k'\partial_{x_k}\lambda_p d\Omega.
\end{align}
Remembering that the adjoint states $(\bm{\lambda},\lambda_p)^T$ are still free to choose, those states can be picked to cancel the perturbations of the primal states. The resulting constraint for the adjoint states is called the adjoint equation. By looking at the volume part of the gradient \eqref{eq:GradientBoundaryAdj}, we see that in $\Omega$ we must have
\begin{subequations}
\begin{align*}
-\mu \Delta \bm{\lambda}- \nabla \lambda_p &= \bm{0}, \\
\div{\bm{\lambda}} &= 0.
\end{align*}
\end{subequations}
Now, let us determine the adjoint boundary conditions, i.e. the conditions, which the adjoint states must fulfill on the boundaries such that the perturbed primal states drop out of the gradient \eqref{eq:GradientBoundaryAdj}. Assuming that we fulfill the adjoint equations, we can rearrange the gradient to
\begin{align*}
dF(\Omega)[\bm{V}] =&\int_{\Gamma_o}V_l n_l\left[  n_j\partial_{x_j}  f +\kappa \left(f- n_j\frac{\partial f}{\partial n_j}\right) + \tandiv{\frac{\partial f}{\partial n_j}}\right] d\Gamma \\
&+\int_{\Gamma_o} v_j'\left[ \frac{\partial f}{\partial v_j} + \mu n_k \partial_{x_k}\lambda_j +\lambda_p n_j \right] 
+(n_k\partial_{x_k})v_j' \left[ \frac{\partial f}{\partial b_j} - \mu \lambda_j \right]
+p'\left[\frac{\partial f}{\partial p}+n_j \lambda_j\right]d\Gamma.
\end{align*}
Remember that we have $v_j' = V_l n_l (n_i \partial_{x_i}) v_j$ on $\Gamma_o$ from the boundary conditions of the linearized Stokes equations \eqref{eq:linStokes}, which is why we do not need to calculate $v_j'$. The remaining perturbed primal states are forced to vanish with the help of the adjoint boundary conditions. Hence, on $\Gamma_o$ the adjoint states must fulfill
\begin{subequations}\label{eq:bcAdjoint}
\begin{align}
\frac{\partial f}{\partial \bm{b}} - \mu \bm{\lambda} = \bm{0}, \\
\frac{\partial f}{\partial p}+\bm{n}\cdot\bm{\lambda} = 0.
\end{align}
\end{subequations}
If the dual states fulfill these conditions, we are left with
\begin{align*}
dF(\Omega)[\bm{V}] =&\int_{\Gamma_o}V_l n_l\left[  n_j\partial_{x_j}  f +\kappa \left(f- n_j\frac{\partial f}{\partial n_j}\right) + \tandiv{\frac{\partial f}{\partial n_j}}\right] d\Gamma \\
&+\int_{\Gamma_o} V_l n_l (n_i \partial_{x_i}) v_j \left[ \frac{\partial f}{\partial v_j} + \mu n_k \partial_{x_k}\lambda_j +\lambda_p n_j \right]d\Gamma .
\end{align*}
Let us now simplify the gradient \eqref{eq:GradientBoundaryAdj} as well as the adjoint boundary conditions \eqref{eq:bcAdjoint} for the drag minimization problem by making use of
\begin{align*}
f_D = -\mu \bm{b} \cdot \bm{a}+p\bm{n}\cdot\bm{a}.
\end{align*}
We have
\begin{align*}
\frac{\partial f_D}{ \partial \bm{v}} &= \bm{0}\text{,}\enskip\frac{\partial f_D}{\partial \bm{b}} = -\mu \bm{a}, \\
\frac{\partial f_D}{\partial p} &= \bm{n}\cdot\bm{a},\enskip\frac{\partial f_D}{\partial \bm{n}} = -\mu \nabla \bm{v} \cdot \bm{a} + p\bm{a}.
\end{align*}
Hence, the adjoint boundary conditions on $\Gamma_{o}$ become $\bm{\lambda} = -\bm{a}$. Furthermore, the gradient changes to 
\begin{align*}
dF_D(\Omega)[\bm{V}] =\int_{\Gamma_o}V_l n_l\left[  n_j\partial_{x_j}  f_D + \tandiv{\frac{\partial f_D}{\partial n_j}} \right]+V_l n_l (n_i \partial_{x_i}) v_j \left[ \mu n_k \partial_{x_k}\lambda_j +\lambda_p n_j \right]d\Gamma,
\end{align*}
because $f_D$ is linear in the $n$ argument and consequently
\begin{align*}
f_D-\bm{n}\frac{\partial f_D}{\partial\bm{n}} = 0.
\end{align*}
Additionally, the term $V_l n_l (n_i \partial_{x_i}) v_j\lambda_p n_j$ is zero, because we can rewrite the velocity gradient as
\begin{align*}
\partial_{x_j} v_l = n_i\partial_{x_i}v_l n_j + t_k\partial_{x_k}v_l t_j.
\end{align*}
Due to the no-slip boundary condition, the derivative w.r.t. the tangential direction $\bm{t}$ drops out. If we now choose the resulting gradient to write down the mass conservation, we get
\begin{align*}
\partial_{x_j} v_j = n_i\partial_{x_i}v_j n_j = 0.
\end{align*}
Plugging in the remaining derivatives of $f_D$, we are left with
\begin{align}\label{eq:dragGrad}
dF_D(\Omega)[\bm{V}] =\int_{\Gamma_o}&V_l n_l\left[  n_j\partial_{x_j}  \left( -\mu n_k\partial_{x_k}v_i a_i+p n_k a_k \right) + \tandiv \left( -\mu \partial_{x_j} v_k a_k + p a_j \right) \right] \nonumber \\
+&V_l n_l (n_i \partial_{x_i}) v_j \mu n_k \partial_{x_k}\lambda_jd\Gamma \nonumber \\
= \int_{\Gamma_{o}}&(\bm{V}\cdot \bm{n})\left[ -\mu (\nabla_{\bm{n}})^2 \bm{v} \bm{a} + (\bm{n}\cdot\nabla)p(\bm{n}\cdot \bm{a})-\div_{\Gamma}(-\mu (\nabla \bm{v})^T \bm{a} + p\bm{a}) \right] \nonumber \\
+&(\bm{V}\cdot \bm{n})\left[ \mu (\bm{n}\cdot\nabla)\lambda_i (\bm{n}\cdot\nabla)v_i\right]d\Gamma,
\end{align}
where we have used 
\begin{align*}
(\nabla_{\bm{n}})^2 \bm{v} := n_j\partial_{x_j}\left( n_k\partial_{x_k}v_i\right).
\end{align*}
The derived shape derivative can further be simplified to facilitate the derivation of the Hessian: Taking a closer look at the tangential divergence part of \eqref{eq:dragGrad}, one sees that the term inside the tangential divergence becomes
\begin{align*}
\div_{\Gamma}(-\mu \nabla \bm{v}\cdot\bm{a}) &= \div(-\mu \nabla \bm{v}\cdot\bm{a}) - \bm{n}\cdot \nabla(-\mu \nabla \bm{v}\cdot\bm{a}) \cdot\bm{n} \\
&=-\partial_{x_j}(\mu \partial_{x_j} v_k a_k )+\mu n_i\partial_{x_i}(\partial_{x_j} v_k a_k)n_j\\
&=-\mu\partial_{x_j x_j}v_k a_k+\mu n_i\partial_{x_i x_j} v_k a_k  n_j.
\end{align*}
For the remaining term, we get
\begin{align*}
\div_{\Gamma}(p\bm{a}) &= \div(p\bm{a}) - (\bm{n}\cdot \nabla)(p\bm{a}) \cdot\bm{n} \\
&=\partial_{x_j} p a_j -n_k\partial_{x_k}p a_j n_j.
\end{align*}
Hence, the tangential divergence term in \eqref{eq:dragGrad} becomes
\begin{align*}
&\int_{\Gamma_{o}} (V_l n_l)\left[ -\mu\partial_{x_j x_j}v_k a_k+\mu n_i\partial_{x_i x_j} v_k a_k  n_j+  \partial_{x_j} p a_j -n_k\partial_{x_k}p a_j n_j \right]d\Gamma \nonumber \\
=& \int_{\Gamma_{o}} (\bm{V}\cdot \bm{n})\left[ -\mu\Delta \bm{v} \cdot \bm{a}+\mu (\nabla_{\bm{n}})^2 \bm{v} \bm{a}+  \nabla p \cdot \bm{a} -(\bm{n}\cdot\nabla)p(\bm{n}\cdot \bm{a}) \right] d\Gamma \nonumber \\
=& \int_{\Gamma_{o}} (\bm{V}\cdot \bm{n})\left[ (-\mu\Delta \bm{v} +  \nabla p) \cdot \bm{a}+\mu (\nabla_{\bm{n}})^2 \bm{v} \bm{a}-(\bm{n}\cdot\nabla)p(\bm{n}\cdot \bm{a}) \right]d\Gamma \nonumber \\
=& \int_{\Gamma_{o}} (\bm{V}\cdot \bm{n})\left[ \mu (\nabla_{\bm{n}})^2 \bm{v} \bm{a}-(\bm{n}\cdot\nabla)p(\bm{n}\cdot \bm{a}) \right]d\Gamma.
\end{align*}
Note, that $-\mu\Delta \bm{v} +  \nabla p$ is zero, due to the fact that the state variables fulfill the Stokes equations. Now most of the terms in \eqref{eq:dragGrad} cancel, meaning that we are left with
\begin{align*}
dF_{D}(\Omega)[\bm{V}] = \int_{\Gamma_{o}} (\bm{V}\cdot \bm{n}) df_D d\Gamma
\end{align*}
where 
\begin{align}\label{eq:df}
df_D:=-\sum_{i=1}^d \mu (\bm{n}\cdot\nabla)\lambda_i (\bm{n}\cdot\nabla)v_i.
\end{align}
\qed
\end{proof}
With the help of the adjoint approach, a numerically cheap calculation of the shape derivative can be ensured, as the computational costs no longer depend on the number of design parameters. This motivates using a detailed description of the optimization patch $\Gamma_o$ by using the nodes of the discretized surface, defined by $\bm{x}_k$ for $k = 1,...,N$ as design parameters. Having derived the shape derivative of the optimization problem \eqref{eq:optProblem}, one can iteratively approach the optimal design with a steepest-descent update. To obtain a search direction for every surface node with the help of the shape derivative, the perturbations
\begin{align}\label{eq:Vk}
\bm{V}_k(\bm{x}) := \bm{n}(\bm{x})\varphi_k(\bm{x}),
\end{align}
for $k = 1,...,N$ is defined, where $\varphi_k:\Gamma_o\to\mathbb{R}$ are piece-wise linear basis functions fulfilling $\varphi_k(\bm{x}_l) = 1 \text{ if }\bm{x}_l = \bm{x}_k$.
The deformation of the $k$-{th} mesh node is now given by
\begin{align*}
\bm{x}_k^{\text{new}} &= \bm{x}_k - \bm{V}_k(\bm{x}_k) dF_{D}[\bm{V}_k(\bm{x}_k)] \\
&= \bm{x}_k - \bm{n}(\bm{x}_k)\varphi_k(\bm{x}_k)dF_{D}[\bm{n}(\bm{x}_k)\varphi_k(\bm{x}_k)] \\
&\approx \bm{x}_k - \frac{1}{2}\bm{n}(\bm{x}_k)df_D(\bm{x}_k)\left( \Vert \bm{x}_k-\bm{x}_{k-1}\Vert + \Vert \bm{x}_{k+1}-\bm{x}_{k}\Vert \right),
\end{align*}
where we used a first order quadrature rule to evaluate the integral in $dF_{D}$. Choosing an adequate step size $\gamma$ yields the steepest-descent update
\begin{align*}
\bm{x}_k^{(l+1)} = \bm{x}_k^{(l)} +\gamma  p_k^{(l)} \bm{n}\left(x_k^{(l)}\right),
\end{align*}
where the steepest-descent search direction is given by
\begin{align*}
p_k^{(l)} = -df_D\left(\bm{x}_k^{(l)}\right).
\end{align*}
Alternatively, we can collect all values of the gradient evaluated at the surface points in a vector 
\begin{align}\label{eq:GradientVector}
\bm{df}^{(l)} = \left( df_D\left(\bm{x}_1^{(l)}\right), \cdots, df_D\left(\bm{x}_{N}^{(l)}\right) \right)^T,
\end{align}
yielding the steepest-descent search direction
\begin{align*}
\bm{p}^{(l)} = -\bm{df}^{(l)}.
\end{align*}
As already discussed, the convergence of steepest-descent is slow. Additionally, the gradient $df_D$ is of insufficient regularity, leading to rough designs with subsequent problems in getting the flow solver to converge. To overcome this problem, we derive the Hessian of the optimization problem analytically. When given a Hessian matrix $\bm{H}\in\mathbb{R}^{N \times N}$, we can choose the Newton search direction
\begin{align}\label{eq:NewtonDirection}
\bm{p}^{(l)} = -\bm{H}^{-1} \bm{df}^{(l)}.
\end{align}
The derivation of the Hessian will show that the inverse Hessian will have properties of a smoothing method, which is why we can combine the tasks of accelerating the optimization and smoothing the search direction.

\section{The analytic Hessian symbol}
\label{sec:section3}
In order to accelerate the optimization process, we wish to make use of Hessian information, or to be more precise, the symbol of the Hessian. This derivation uses the techniques introduced in \cite{arian1995analysis,arian1999analysis,arian1999preconditioning,schmidt2009impulse}. In contrast to previous works, our analysis holds for smooth geometries beyond the typical upper half-plane, allowing the derivation of the Hessian symbol in applications of practical interest. For a given operator $L$, its symbol $\sigma_L$ is the response of $L$ to a wave with a fixed frequency $\omega$. To give a brief understanding of operator symbols, we look at the following example:
\begin{example}
We derive the symbol of the operator
\begin{align*}
Lg := \left(1-\frac{d^2}{dx^2}\right)g.
\end{align*}
To derive the response of $L$ to an input wave, we choose $g = e^{-i\omega x}$, which yields
\begin{align*}
L e^{-i\omega x} = \left(1+\omega^2\right)e^{-i\omega x}.
\end{align*}
The symbol is therefore given by $\sigma_L = 1+\omega^2$. It can be seen that the operator $L$ amplifies frequencies quadratically with respect to the input frequency $\omega$. The fact that $\sigma_L$ is a real number, tells us that the operator does not cause a phase shift.
\end{example}
Our aim is to derive the Hessian symbol $\sigma_H$ for the drag minimization problem when using the Stokes equations. For this, the Hessian response to a Fourier mode with frequency $\omega$, which is used to perturb the optimization patch $\Gamma_{o}$ is investigated analytically. We assume a two-dimensional geometry, which can be described by body fitted coordinates $\xi_1$ and $\xi_2$. A mapping to the physical coordinates is given by
\begin{align*}
\Phi\left(\xi_1,\xi_2\right) = \bm{x}.
\end{align*}
The physical coordinates of the optimization patch are
\begin{align*}
\Gamma_o(\xi_1) = \Phi\left(\xi_1,0\right),
\end{align*}
meaning that $\xi_1$ is the parameter describing the position on the optimization patch. We choose the parametrization such that
\begin{align*}
\left\Vert \frac{d}{d \xi_1}\Gamma_o(\xi_1)\right\Vert = 1,
\end{align*} 
i.e., the tangential vector $\bm{t}$ has unit length. If the remaining parameter $\xi_2$ is used as a parametrization into the normal direction $\bm{n}$, we can write our mapping as
\begin{align*}
\Phi\left(\xi_1,\xi_2\right) = \Gamma_o(\xi_1)+\xi_2\bm{n}( \Gamma_o(\xi_1) ).
\end{align*}
In this setting, we derive the symbol of the Hessian in the following theorem.
\begin{theorem}
The symbol of the Hessian for the Stokes equations is given by
\begin{align}\label{eq:HessianSymbol}
\sigma_H = \beta_1 + \beta_2 \omega 
\end{align}
where
\begin{align}\label{eq:beta1}
\beta_1 = \mu (\bm{n}\cdot\nabla) \left(\sum_{k = 1}^2 (\bm{n}\cdot\nabla)v_k (\bm{n}\cdot\nabla)\lambda_k \right) 
\end{align}
and
\begin{equation}\label{eq:beta2}
\beta_2 = - 2\mu \sum_{k = 1}^2 (\bm{n}\cdot\nabla) \lambda_k (\bm{n}\cdot\nabla) v_k .
\end{equation}
\end{theorem}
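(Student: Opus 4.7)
The plan is to compute the response of the shape integrand $df_D$ in \eqref{eq:df} under a wave-shaped normal perturbation of $\Gamma_o$, and to read off the multiplier as the symbol $\sigma_H$. Following the convention of the example preceding the theorem, I would take the perturbing field $\bm{V}=\varphi\bm{n}$ with $\varphi(\xi_1)=e^{-i\omega\xi_1}$ in body-fitted coordinates, so that $(\bm{n}\cdot\nabla)$ reduces to $\partial_{\xi_2}|_{\xi_2=0}$ and the tangential wave structure is preserved. Linearizing $df_D$ around the reference geometry and identifying the coefficient of $\varphi$ will then deliver $\sigma_H$.

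The key decomposition is the product rule applied to $d(df_D)[\bm{V}]$. Since $df_D=-\mu\sum_{k}(\bm{n}\cdot\nabla)v_k\,(\bm{n}\cdot\nabla)\lambda_k$, the shape derivative breaks into three pieces: (i) a purely geometric contribution from evaluating the integrand on the moving surface, which by the Hadamard structural formula is proportional to $\varphi\,(\bm{n}\cdot\nabla)(df_D)$ and reproduces, up to sign, precisely the $\beta_1$ expression of \eqref{eq:beta1}; (ii) a local shape derivative of $(\bm{n}\cdot\nabla)\bm{v}$ via $\bm{v}'[\bm{V}]$ solving \eqref{eq:linStokes}; and (iii) an analogous local shape derivative of $(\bm{n}\cdot\nabla)\bm{\lambda}$. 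Variations of the normal vector $\bm{n}$ itself and curvature terms inherited from the body-fitted metric can be checked to be subleading in $\omega$ and therefore do not enter the principal symbol.

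The technical heart of the argument is evaluating $(\bm{n}\cdot\nabla)\bm{v}'[\bm{V}]$ on $\Gamma_o$. By \eqref{eq:linStokes}, $\bm{v}'$ is the homogeneous Stokes flow with Dirichlet datum $-(\bm{n}\cdot\nabla)\bm{v}\,\varphi$. I would work in the local body-fitted half-plane with $\xi_2\ge 0$ representing the normal coordinate, freeze the metric at the evaluation point, and substitute the separable ansatz $\bm{v}'=\hat{\bm{v}}(\xi_2)e^{-i\omega\xi_1}$, $p'=\hat{p}(\xi_2)e^{-i\omega\xi_1}$ into the frozen-coefficient Stokes system. This collapses to a constant-coefficient ODE system in $\xi_2$ whose characteristic roots are $\pm|\omega|$, each with algebraic multiplicity two, so the decay requirement as $\xi_2\to\infty$ selects combinations of $e^{-|\omega|\xi_2}$ and $\xi_2 e^{-|\omega|\xi_2}$. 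Applying $\partial_{\xi_2}|_{\xi_2=0}$ to such a solution multiplies the boundary datum by a factor linear in $\omega$. The same computation with the adjoint datum $-(\bm{n}\cdot\nabla)\bm{\lambda}\,\varphi$ yields a symmetric second contribution, and together they produce the $\beta_2\omega$ term of \eqref{eq:beta2}, the factor of $2$ arising from the two structurally identical product-rule pieces involving $\bm{v}$ and $\bm{\lambda}$.

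The principal obstacle is the half-plane reduction in curvilinear coordinates. The body-fitted metric has $\xi_1$-dependent coefficients driven by the curvature $\kappa$, so the Fourier mode is not an exact eigenfunction of the linearized Stokes operator; a scaling argument in $\omega$ is needed to show that the curvature corrections contribute only at lower order in $\omega$ and thus perturb $\beta_1$ and $\beta_2$ themselves rather than altering the structural form $\sigma_H=\beta_1+\beta_2\omega$. Once this frozen-coefficient approximation is justified, assembling the geometric piece from (i) with the two symmetric state-derivative pieces from (ii) and (iii) reproduces exactly the symbol \eqref{eq:HessianSymbol} with the coefficients \eqref{eq:beta1} and \eqref{eq:beta2}.
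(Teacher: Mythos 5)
Your overall strategy coincides with the paper's: perturb $\Gamma_o$ by a normal Fourier mode, apply the product rule to $df_D=-\mu\sum_k(\bm{n}\cdot\nabla)v_k\,(\bm{n}\cdot\nabla)\lambda_k$ from \eqref{eq:df}, solve the linearized primal and adjoint Stokes problems with wave-modulated Dirichlet data in body-fitted coordinates, and read off the multiplier. However, the two places where you deviate from the paper are exactly the two places where the constants of the theorem are decided, and in both your argument stops short of fixing them. For $\beta_1$: you extract it from the Hadamard transport term $\varphi\,(\bm{n}\cdot\nabla)(df_D)$ and declare that it matches \eqref{eq:beta1} ``up to sign''. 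But $(\bm{n}\cdot\nabla)df_D=-\beta_1$ because of the minus sign in \eqref{eq:df}, so the sign \emph{is} the content of this step. The paper obtains $\beta_1$ differently, namely from the $\xi_2$-dependence of the amplitudes $\hat{\bm{v}}=-\partial_{\xi_2}\bm{v}$ and $\hat{\bm{\lambda}}=-\partial_{\xi_2}\bm{\lambda}$ when $\partial_{\xi_2}$ is applied to the ansatz, producing $\mu\sum_k\left(\partial_{\xi_2}v_k\,\partial_{\xi_2\xi_2}\lambda_k+\partial_{\xi_2}\lambda_k\,\partial_{\xi_2\xi_2}v_k\right)$, which the product rule recombines into \eqref{eq:beta1} with the correct sign. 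You need to either carry your transport term with a definite sign or show your bookkeeping is equivalent.

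The more serious issue is $\beta_2$. You correctly note that the frozen-coefficient Stokes half-plane problem has the characteristic root of multiplicity two, so the decaying solutions are spanned by $e^{-|\omega|\xi_2}$ and $\xi_2 e^{-|\omega|\xi_2}$. This is actually \emph{more} faithful to the Stokes structure than the paper, which retains only the single exponential mode $e^{i\omega_2\xi_2}$ with $\omega_2=i\omega_1$ obtained from the determinant condition. But it changes the constant: with the two-dimensional decaying solution space, the Dirichlet-to-Neumann map of the Stokes half-plane sends tangential boundary data $g$ (and the data here is purely tangential, since $(\bm{n}\cdot\nabla)\bm{v}\cdot\bm{n}=0$ on $\Gamma_o$ by no-slip and incompressibility) to $-2|\omega|\,g$ for the tangential component of the normal derivative, not $-|\omega|\,g$. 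Each of your two product-rule pieces then carries a factor $2|\omega|$, and combined with the additional factor $2$ you attribute to the $\bm{v}$/$\bm{\lambda}$ symmetry you arrive at $-4\mu\sum_k(\bm{n}\cdot\nabla)\lambda_k(\bm{n}\cdot\nabla)v_k$, i.e.\ twice the theorem's \eqref{eq:beta2}. So the assertion that ``applying $\partial_{\xi_2}|_{\xi_2=0}$ multiplies the boundary datum by a factor linear in $\omega$'' is not enough: the coefficient of that linear factor is precisely what the theorem claims, and your route as described does not reproduce it. To close the gap you must either compute the Dirichlet-to-Neumann coefficient explicitly and reconcile the resulting factor with \eqref{eq:beta2}, or adopt the paper's single-mode ansatz, in which the response factor is $|\omega|$ per piece and the factor $2$ comes solely from the primal/adjoint symmetry.
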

\begin{proof}
The Hessian of our problem is the response of the gradient $df_D$ given in \eqref{eq:df} to a perturbation of the design space, which we call $\alpha$. The response of a function $g$ due to a perturbation $\alpha$ is denoted as
\begin{align*}
g'[\alpha] = \lim_{\epsilon\rightarrow 0} \frac{g(\Gamma_o^\epsilon)- g(\Gamma_o)}{\epsilon},
\end{align*}
where the perturbed surface is given by
\begin{align*}
\Gamma_o^\epsilon := \Gamma_o + \epsilon \alpha\bm{n}.
\end{align*}
The response of the gradient $df_D$ to such a perturbation is now given by
\begin{equation}\label{eq:gradientPert}
df_D'[\alpha] = -\mu n_k\partial_{x_k}\lambda_i'[\alpha] n_l\partial_{x_l}v_i-\mu n_k\partial_{x_k}\lambda_i n_l\partial_{x_l}v_i'[\alpha].
\end{equation}
The change of the state variables as well as the adjoint variables due to a small perturbation $\alpha$ in the normal direction can be computed from the linearized primal and adjoint state equations, which are 
\begin{align*}
-\mu \Delta \bm{v}'[\alpha] + \nabla p'[\alpha] &= 0,\\
\nabla \cdot \bm{v}'[\alpha] &= 0,\\
\bm{v}'[\alpha] = -(\bm{n}\cdot\nabla)\bm{v} \alpha \text{ on } &\Gamma_{o},
\end{align*}
and
\begin{align*}
-\mu \Delta \bm{\lambda}'[\alpha] - \nabla \lambda_p'[\alpha] &= 0,\\
\nabla \cdot \bm{\lambda}'[\alpha] &= 0,\\
\bm{\lambda}'[\alpha] = -(\bm{n}\cdot\nabla)\bm{\lambda}\alpha \text{ on } &\Gamma_{o}.
\end{align*}
Transforming these equations into body fitted coordinates $(\xi_1,\xi_2)$ leads to
\begin{subequations}\label{eq:primalPert}
\begin{align}
-\mu \frac{\partial \xi_l}{\partial x_i}\frac{\partial^2 v_j'[\alpha]}{\partial \xi_k \xi_l}\frac{\partial \xi_k}{\partial x_i}  -\mu \frac{\partial v_j'}{\partial\xi_k}\frac{\partial^2 \xi_k}{\partial x_i^2} + \frac{\partial p'[\alpha]}{\partial \xi_k}\frac{\partial \xi_k}{\partial x_j}  &= 0,\\
\frac{\partial v_i'[\alpha]}{\partial \xi_k}\frac{\partial \xi_k}{\partial x_i} &= 0,\\
v_j'[\alpha] = -n_k \frac{\partial v_j}{\partial \xi_i}\frac{\partial \xi_i}{\partial x_k} \alpha \text{ on } &\Gamma_{o},
\end{align}
\end{subequations}
and
\begin{subequations}\label{eq:adjointPert}
\begin{align}
-\mu \frac{\partial \xi_l}{\partial x_i}\frac{\partial^2 \lambda_j'[\alpha]}{\partial \xi_k \xi_l}\frac{\partial \xi_k}{\partial x_i}-\mu\frac{\partial \lambda_j'}{\partial\xi_k}\frac{\partial^2 \xi_k}{\partial x_i^2}  - \frac{\partial \lambda_p'[\alpha]}{\partial \xi_k}\frac{\partial \xi_k}{\partial x_j}  &= 0,\\
\frac{\partial \lambda_i'[\alpha]}{\partial \xi_k}\frac{\partial \xi_k}{\partial x_i} &= 0,\\
\lambda_j'[\alpha] = -n_k \frac{\partial \lambda_j}{\partial \xi_i}\frac{\partial \xi_i}{\partial x_k} \alpha \text{ on } &\Gamma_{o}.
\end{align}
\end{subequations}
As our goal is to determine the Hessian response to a Fourier mode, we let $\alpha$ be a mode with frequency $\omega_1$, meaning that we have
\begin{align*}
\alpha = e^{i \omega_1 \xi_1}.
\end{align*}
Furthermore, we make the assumption that the perturbed states have the form
\begin{align}\label{eq:assumptionPerturbed}
\bm{v}'[\alpha] &= \hat{\bm{v}} e^{i \omega_1 \xi_1} e^{i \omega_2^{p} \xi_2},\enskip
p'[\alpha] = \hat{p} e^{i \omega_1 \xi_1} e^{i \omega_2^{p} \xi_2},\nonumber \\
\bm{\lambda}'[\alpha] &= \bm{\hat{\lambda}} e^{i \omega_1 \xi_1} e^{i \omega_2^{a} \xi_2},\enskip
\lambda_p'[\alpha] = \hat{\lambda}_p e^{i \omega_1 \xi_1} e^{i \omega_2^{a} \xi_2}.
\end{align}
It is important to note that the choice of the dependency on $\xi_1$ is straight forward, since we would like to match the boundary conditions of the perturbed state variables for $\xi_2 = 0$. The complex exponential or wave like dependency in $\xi_2$ direction is a Fourier ansatz. There are two unknowns that need to be determined, namely the amplitudes which are the $\hat{\bullet}$ variables as well as the response frequencies $\omega_2^{p,a}$. Our first goal is to determine these response frequencies $\omega_2^{p}$ and $\omega_2^{a}$. For this, we insert our ansatz for the perturbed states into the linearized state equations \eqref{eq:primalPert} and \eqref{eq:adjointPert}. For $\omega_2 = \omega_2^{p}$ we now must fulfill
\begin{align*}
\begin{pmatrix}
\mu \omega_l\omega_k\frac{\partial \xi_l}{\partial x_j}\frac{\partial \xi_k}{\partial x_j} -i\omega_k \mu\frac{\partial^2 \xi_k}{\partial x_j^2}& 0 & i \omega_k \frac{\partial \xi_k}{\partial x_1} \\
0 & \mu \omega_l \omega_k\frac{\partial \xi_l}{\partial x_j}\frac{\partial \xi_k}{\partial x_j}-i\omega_k\mu \frac{\partial^2 \xi_k}{\partial x_j^2} & i \omega_k \frac{\partial \xi_k}{\partial x_2} \\
i \omega_k \frac{\partial \xi_k}{\partial x_1} & i \omega_k \frac{\partial \xi_k}{\partial x_2} & 0  \\
\end{pmatrix}
\begin{pmatrix}
\hat{v}_1 \\
\hat{v}_2 \\
\hat{p}  \\
\end{pmatrix}
=
\begin{pmatrix}
0 \\
0 \\
0  \\
\end{pmatrix}
\end{align*}
as well as for $\omega_2 = \omega_2^{a}$ 
\begin{align*}
\begin{pmatrix}
\mu \omega_l\omega_k\frac{\partial \xi_l}{\partial x_j}\frac{\partial \xi_k}{\partial x_j}-i\omega_k \mu\frac{\partial^2 \xi_k}{\partial x_j^2}& 0 & -i \omega_k \frac{\partial \xi_k}{\partial x_1} \\
0 & \mu \omega_l \omega_k\frac{\partial \xi_l}{\partial x_j}\frac{\partial \xi_k}{\partial x_j}-i\omega_k \mu\frac{\partial^2 \xi_k}{\partial x_j^2} & -i \omega_k \frac{\partial \xi_k}{\partial x_2} \\
i \omega_k \frac{\partial \xi_k}{\partial x_1} & i \omega_k \frac{\partial \xi_k}{\partial x_2} & 0  \\
\end{pmatrix}
\begin{pmatrix}
\hat{\lambda}_1 \\
\hat{\lambda}_2 \\
\hat{\lambda}_p  \\
\end{pmatrix}
=
\begin{pmatrix}
0 \\
0 \\
0  \\
\end{pmatrix}.
\end{align*}

These two systems of equations only have a non-trivial solution if the determinant of the two matrices is zero. Note that these two matrices only differ in the sign of the last row, leading to determinants, which have the same roots. Therefore, every non-trivial response frequency of the primal system is also a valid response frequency of the adjoint system. Hence, we denote $\omega_2^{p}$ and $\omega_2^{a}$ as $\omega_2$, which leads to the determinant
\begin{align*}
&\left(\sum_{l,k,j}\mu \omega_l\omega_k\frac{\partial \xi_l}{\partial x_j}\frac{\partial \xi_k}{\partial x_j}-\sum_{k,j}i\omega_k\mu \frac{\partial^2 \xi_k}{\partial x_j^2}\right)\left( \sum_{k}\omega_k \frac{\partial \xi_k}{\partial x_2} \sum_{k} \omega_k \frac{\partial \xi_k}{\partial x_2}\right)\\
&+\left(\sum_k\omega_k \frac{\partial \xi_k}{\partial x_1}\right)\left( \sum_{k}\omega_k \frac{\partial \xi_k}{\partial x_1}  \left(\sum_{l,k,j} \mu \omega_l \omega_k\frac{\partial \xi_l}{\partial x_j}\frac{\partial \xi_k}{\partial x_j}-\sum_{k,j}i\omega_k\mu \frac{\partial^2 \xi_k}{\partial x_j^2}\right) \right) \\
=&\left(\sum_{l,k,j}\mu \omega_l\omega_k\frac{\partial \xi_l}{\partial x_j}\frac{\partial \xi_k}{\partial x_j}-\sum_{k,j}i\omega_k \mu\frac{\partial^2 \xi_k}{\partial x_j^2}\right)\left[ \left( \sum_{k}\omega_k \frac{\partial \xi_k}{\partial x_1}\right)^2 + \left( \sum_{k}\omega_k \frac{\partial \xi_k}{\partial x_2}\right)^2 \right] \\
=&\mu\left(\left( \sum_{k}\omega_k \frac{\partial \xi_k}{\partial x_1}\right)^2 + \left( \sum_{k}\omega_k \frac{\partial \xi_k}{\partial x_2}\right)^2-\sum_{k,j}i\omega_k \frac{\partial^2 \xi_k}{\partial x_j^2}\right)\left[ \left( \sum_{k}\omega_k \frac{\partial \xi_k}{\partial x_1}\right)^2 + \left( \sum_{k}\omega_k \frac{\partial \xi_k}{\partial x_2}\right)^2 \right]\stackrel{!}{=} 0.
\end{align*}
Here, we no longer use Einstein's sum convention such that we can reuse indices. Let us determine the roots of the polynomial inside the square brackets to obtain a valid response frequency $\omega_2$. The determinant will be zero if $\omega_2$ fulfills
\begin{align*}
&\left( \sum_{k}\omega_k \frac{\partial \xi_k}{\partial x_1}\right)^2 + \left( \sum_{k}\omega_k \frac{\partial \xi_k}{\partial x_2}\right)^2 = 0\\
\Leftrightarrow& \omega_1^2\left(\frac{\partial \xi_1}{\partial x_2}\right)^2+2\omega_1\omega_2\frac{\partial \xi_1}{\partial x_2}\frac{\partial \xi_2}{\partial x_2}+\omega_2^2\left(\frac{\partial \xi_2}{\partial x_2}\right)^2 +\omega_1^2\left(\frac{\partial \xi_1}{\partial x_1}\right)^2+2\omega_1\omega_2\frac{\partial \xi_1}{\partial x_1}\frac{\partial \xi_2}{\partial x_1}+\omega_2^2\left(\frac{\partial \xi_2}{\partial x_1}\right)^2 = 0 \\
\Leftrightarrow&\omega_2^2\left(\left(\frac{\partial \xi_2}{\partial x_2}\right)^2+\left(\frac{\partial \xi_2}{\partial x_1}\right)^2\right)+\omega_2\left[ 2\omega_1\left(\frac{\partial \xi_1}{\partial x_2}\frac{\partial \xi_2}{\partial x_2}+\frac{\partial \xi_1}{\partial x_1}\frac{\partial \xi_2}{\partial x_1}\right)\right] + \omega_1^2\left(\frac{\partial \xi_1}{\partial x_2}\right)^2+\omega_1^2\left(\frac{\partial \xi_1}{\partial x_1}\right)^2 =0.
\end{align*}
For simplicity, we define 
\begin{align*}
c:=\frac{1}{\left(\frac{\partial \xi_2}{\partial x_2}\right)^2+\left(\frac{\partial \xi_2}{\partial x_1}\right)^2}.
\end{align*}
Applying the $p,q-$formula with 
\begin{align*}
p &= 2c\omega_1\left( \frac{\partial \xi_1}{\partial x_2}\frac{\partial \xi_2}{\partial x_2}+\frac{\partial \xi_1}{\partial x_1}\frac{\partial \xi_2}{\partial x_1} \right) \\
q &= c \omega_1^2\left[\left(\frac{\partial \xi_1}{\partial x_2}\right)^2+\left(\frac{\partial \xi_1}{\partial x_1}\right)^2\right]
\end{align*}
yields
\begin{align*}
\omega_2^{1,2} =& -c\omega_1\left( \frac{\partial \xi_1}{\partial x_2}\frac{\partial \xi_2}{\partial x_2}+\frac{\partial \xi_1}{\partial x_1}\frac{\partial \xi_2}{\partial x_1} \right) \pm \omega_1 \sqrt{ c^2 \left(\frac{\partial \xi_1}{\partial x_2}\frac{\partial \xi_2}{\partial x_2}+\frac{\partial \xi_1}{\partial x_1}\frac{\partial \xi_2}{\partial x_1} \right)^2- c\left[\left(\frac{\partial \xi_1}{\partial x_2}\right)^2+\left(\frac{\partial \xi_1}{\partial x_1}\right)^2\right] } \\
=& -c\omega_1 \left[ \left( \frac{\partial \xi_1}{\partial x_2}\frac{\partial \xi_2}{\partial x_2}+\frac{\partial \xi_1}{\partial x_1}\frac{\partial \xi_2}{\partial x_1} \right) \mp  \sqrt{ \left(\frac{\partial \xi_1}{\partial x_2}\frac{\partial \xi_2}{\partial x_2}+\frac{\partial \xi_1}{\partial x_1}\frac{\partial \xi_2}{\partial x_1}\right)^2 - \frac{1}{c}\left[\left(\frac{\partial \xi_1}{\partial x_2}\right)^2+\left(\frac{\partial \xi_1}{\partial x_1}\right)^2\right] }\right].
\end{align*}
Let us take a closer look at the term inside the square root, which is
\begin{align*}
&\left(\frac{\partial \xi_1}{\partial x_2}\frac{\partial \xi_2}{\partial x_2}+\frac{\partial \xi_1}{\partial x_1}\frac{\partial \xi_2}{\partial x_1}\right)^2 - \frac{1}{c}\left[\left(\frac{\partial \xi_1}{\partial x_2}\right)^2+\left(\frac{\partial \xi_1}{\partial x_1}\right)^2\right]\\
=&\left(\frac{\partial \xi_1}{\partial x_2}\frac{\partial \xi_2}{\partial x_2}\right)^2+2\frac{\partial \xi_1}{\partial x_2}\frac{\partial \xi_2}{\partial x_2}\frac{\partial \xi_1}{\partial x_1}\frac{\partial \xi_2}{\partial x_1}+\left(\frac{\partial \xi_1}{\partial x_1}\frac{\partial \xi_2}{\partial x_1}\right)^2 - \left(\left(\frac{\partial \xi_1}{\partial x_2}\right)^2+\left(\frac{\partial \xi_1}{\partial x_1}\right)^2\right)\left(\left(\frac{\partial \xi_2}{\partial x_2}\right)^2+\left(\frac{\partial \xi_2}{\partial x_1}\right)^2\right)\\
=&2\frac{\partial \xi_1}{\partial x_2}\frac{\partial \xi_2}{\partial x_2}\frac{\partial \xi_1}{\partial x_1}\frac{\partial \xi_2}{\partial x_1} -\left(\frac{\partial \xi_1}{\partial x_2}\right)^2\left(\frac{\partial \xi_2}{\partial x_1}\right)^2-\left(\frac{\partial \xi_1}{\partial x_1}\right)^2\left(\frac{\partial \xi_2}{\partial x_2}\right)^2 \\
=&-\left( \frac{\partial \xi_1}{\partial x_2}\frac{\partial \xi_2}{\partial x_1}-\frac{\partial \xi_1}{\partial x_1}\frac{\partial \xi_2}{\partial x_2} \right)^2.
\end{align*}
Since this term is always negative, we know that the square root will result in a complex term, leading to
\begin{equation}\label{eq:omega1234}
\omega_2^{1,2} =-c\omega_1 \left[ \left( \frac{\partial \xi_1}{\partial x_2}\frac{\partial \xi_2}{\partial x_2}+\frac{\partial \xi_1}{\partial x_1}\frac{\partial \xi_2}{\partial x_1} \right) \mp  i\left( \frac{\partial \xi_1}{\partial x_2}\frac{\partial \xi_2}{\partial x_1}-\frac{\partial \xi_1}{\partial x_1}\frac{\partial \xi_2}{\partial x_2} \right)\right].
\end{equation}
This means the system can be solved for $\omega_2^{1,2}(\omega_1)$. We express the derivatives of the body fitted coordinates as derivatives of the physical coordinates, which have an intuitive geometric meaning on the boundary, since
\begin{align*}
&\left(\frac{\partial x_1}{\partial\xi_1},\frac{\partial x_2}{\partial\xi_1}\right)^T_{\xi_2 = 0} = \frac{d}{d \xi_1}\Gamma_o(\xi_1) = \bm{t}(\xi_1), \\
&\left(\frac{\partial x_1}{\partial\xi_2},\frac{\partial x_2}{\partial\xi_2}\right)^T_{\xi_2 = 0} = \bm{n}\left(\Gamma_o(\xi_1)\right).
\end{align*}
The relation between the derivatives of the two coordinate systems can be determined by noting that
\begin{align*}
\begin{pmatrix}
\frac{\partial }{\partial \xi_1} \\
\frac{\partial }{\partial \xi_2} \\
\end{pmatrix}
=
\begin{pmatrix}
\frac{\partial x_1}{\partial \xi_1}& \frac{\partial x_2}{\partial \xi_1}  \\
\frac{\partial x_1}{\partial \xi_2} & \frac{\partial x_2}{\partial \xi_2} \\
\end{pmatrix}
\begin{pmatrix}
\frac{\partial }{\partial x_1} \\
\frac{\partial }{\partial x_2} \\
\end{pmatrix}
\end{align*}
and
\begin{align*}
\begin{pmatrix}
\frac{\partial }{\partial x_1} \\
\frac{\partial }{\partial x_2} \\
\end{pmatrix}
=
\begin{pmatrix}
\frac{\partial \xi_1}{\partial x_1}& \frac{\partial \xi_2}{\partial x_1}  \\
\frac{\partial \xi_1}{\partial x_2} & \frac{\partial \xi_2}{\partial x_2} \\
\end{pmatrix}
\begin{pmatrix}
\frac{\partial }{\partial \xi_1} \\
\frac{\partial }{\partial \xi_2} \\
\end{pmatrix}
.
\end{align*}
This means that
\begin{align*}
\begin{pmatrix}
\frac{\partial \xi_1}{\partial x_1}& \frac{\partial \xi_2}{\partial x_1}  \\
\frac{\partial \xi_1}{\partial x_2} & \frac{\partial \xi_2}{\partial x_2} \\
\end{pmatrix}
=
\begin{pmatrix}
\frac{\partial x_1}{\partial \xi_1}& \frac{\partial x_2}{\partial \xi_1}  \\
\frac{\partial x_1}{\partial \xi_2} & \frac{\partial x_2}{\partial \xi_2} \\
\end{pmatrix}^{-1}
=
\frac{1}{\frac{\partial x_1}{\partial \xi_1}\frac{\partial x_2}{\partial \xi_2}-\frac{\partial x_2}{\partial \xi_1}\frac{\partial x_1}{\partial \xi_2}}
\begin{pmatrix}
\frac{\partial x_2}{\partial \xi_2}& -\frac{\partial x_2}{\partial \xi_1}  \\
-\frac{\partial x_1}{\partial \xi_2} & \frac{\partial x_1}{\partial \xi_1} \\
\end{pmatrix}
.
\end{align*}
The response frequency \eqref{eq:omega1234} can now be evaluated on the boundary:
\begin{align*}
\left.\omega_2^{1,2}\right\vert_{\Gamma_o} =&\left.\mp \frac{c\omega_1}{\frac{\partial x_1}{\partial \xi_1}\frac{\partial x_2}{\partial \xi_2}-\frac{\partial x_2}{\partial \xi_1}\frac{\partial x_1}{\partial \xi_2}} \left[ -\frac{\partial x_1}{\partial \xi_2}\frac{\partial x_1}{\partial \xi_1}-\frac{\partial x_2}{\partial \xi_2}\frac{\partial x_2}{\partial \xi_1}  
\mp  i\left( \frac{\partial x_1}{\partial \xi_2}\frac{\partial x_2}{\partial \xi_1}-\frac{\partial x_2}{\partial \xi_2}\frac{\partial x_1}{\partial \xi_1} \right)\right]\right\vert_{\Gamma_o} \\
=&\left.\mp \frac{c\omega_1}{\frac{\partial x_1}{\partial \xi_1}\frac{\partial x_2}{\partial \xi_2}-\frac{\partial x_2}{\partial \xi_1}\frac{\partial x_1}{\partial \xi_2}} \left[ -\bm{n}\cdot\bm{t} \mp  i\left( \frac{\partial x_1}{\partial \xi_2}\frac{\partial x_2}{\partial \xi_1}-\frac{\partial x_2}{\partial \xi_2}\frac{\partial x_1}{\partial \xi_1} \right)\right]\right\vert_{\Gamma_o} \\
=& \pm i \left.c\right\vert_{\Gamma_o}\omega_1.
\end{align*}
For $c$ we obtain
\begin{align*}
c\vert_{\Gamma_o} =& \frac{1}{\left(\frac{\partial x_1}{\partial \xi_1}\frac{\partial x_2}{\partial \xi_2}-\frac{\partial x_2}{\partial \xi_1}\frac{\partial x_1}{\partial \xi_2}\right)^2}\frac{1}{\left(\frac{\partial x_1}{\partial \xi_1}\right)^2+\left(\frac{\partial x_2}{\partial \xi_1}\right)^2}=\frac{1}{\Vert \bm{t} \Vert^6 } = 1,
\end{align*}
due to the fact that $\bm{\hat{t}}:=\left(\partial_{\xi_2}x_2,-\partial_{\xi_2}x_1\right)^T$ is either $\bm{t}$ or $-\bm{t}$, since
\begin{align*}
\bm{\hat{t}}\cdot \bm{n} = 0, \enskip \left\Vert \bm{\hat{t}} \right\Vert = 1.
\end{align*}
We now have two possible choices for $\omega_2^{p}$ and $\omega_2^{a}$, which will allow a non-trivial solution, namely
\begin{equation}\label{eq:omega12OnGamma}
\left.\omega_2^{1,2}\right\vert_{\Gamma_o} = \pm i \omega_1.
\end{equation} 
Inserting the expression for $\omega_2$, which we have derived in \eqref{eq:omega1234}, into the assumption for the perturbed state variables \eqref{eq:assumptionPerturbed}, we get
\begin{align*}
\bm{v}'[\alpha] &= \hat{\bm{v}} e^{i \omega_1 \xi_1} e^{i\omega_2^{1,2}(\omega_1)\xi_2},\enskip
p'[\alpha] = \hat{p} e^{i \omega_1 \xi_1} e^{i\omega_2^{1,2}(\omega_1)\xi_2},\\
\bm{\lambda}'[\alpha] &= \bm{\hat{\lambda}} e^{i \omega_1 \xi_1} e^{i\omega_2^{1,2}(\omega_1)\xi_2},\enskip
\lambda_p'[\alpha] = \hat{\lambda}_p e^{i \omega_1 \xi_1} e^{i\omega_2^{1,2}(\omega_1)\xi_2}.
\end{align*}
The remaining unknowns in our ansatz for the perturbed primal and adjoint states are the $\hat{\bullet}$ variables, which can be determined with the help of the boundary conditions. Remember, that we are only interested in knowing the perturbed states, which influence the perturbation of the gradient \eqref{eq:gradientPert}, namely $\bm{v}'[\alpha]$ and $\bm{\lambda}'[\alpha]$. The boundary conditions contain normal derivatives of those states, which is why we first write down the normal derivatives for boundary fitted coordinates. We have
\begin{align*}
n_k \frac{\partial W_j}{\partial \xi_i}\frac{\partial \xi_i}{\partial x_k} =& \frac{1}{\frac{\partial x_1}{\partial \xi_1}\frac{\partial x_2}{\partial \xi_2}-\frac{\partial x_2}{\partial \xi_1}\frac{\partial x_1}{\partial \xi_2}}\left( \left( \frac{\partial x_2}{\partial \xi_2}, -\frac{\partial x_1}{\partial \xi_2}\right)^T \cdot\bm{n} \frac{\partial  W_j}{\partial \xi_1} +\left( -\frac{\partial x_2}{\partial \xi_1}, \frac{\partial x_1}{\partial \xi_1}\right)^T \cdot\bm{n} \frac{\partial  W_j}{\partial \xi_2} \right)\\
=& \frac{1}{\frac{\partial x_1}{\partial \xi_1}\frac{\partial x_2}{\partial \xi_2}-\frac{\partial x_2}{\partial \xi_1}\frac{\partial x_1}{\partial \xi_2}}\left( \left( n_2, -n_1\right)^T \cdot\bm{n} \frac{\partial  W_j}{\partial \xi_1} +\left( -\frac{\partial x_2}{\partial \xi_1}, \frac{\partial x_1}{\partial \xi_1}\right)^T \cdot\left( \frac{\partial x_1}{\partial \xi_2}, \frac{\partial x_2}{\partial \xi_2}\right)^T \frac{\partial  W_j}{\partial \xi_2} \right)\\
=&\frac{\partial  W_j}{\partial \xi_2}.
\end{align*}
Plugging this expression into the boundary condition of the perturbed state variables given in \eqref{eq:primalPert} and \eqref{eq:adjointPert} leads to
\begin{align*}
v_j'[\alpha] &= \hat{v}_j e^{i \omega_1 \xi_1} = - \frac{\partial v_j}{\partial \xi_2}\alpha,\\
\lambda_j'[\alpha] &= \hat{\lambda}_j e^{i \omega_1 \xi_1} =- \frac{\partial \lambda_j}{\partial \xi_2} \alpha,
\end{align*}
meaning that
\begin{align*}
\hat{\bm{v}} &= - \partial_{\xi_2}\bm{v},\enskip \hat{\bm{\lambda}} = - \partial_{\xi_2}\bm{\lambda}.
\end{align*}
If we now write down the ansatz for the perturbed state variables, we get
\begin{align*}
\bm{v}'[\alpha] = - \partial_{\xi_2}\bm{v} e^{i \omega_1 \xi_1} e^{i\omega_2^{1,2}(\omega_1)\xi_2},\enskip \bm{\lambda}'[\alpha] = - \partial_{\xi_2}\bm{\lambda} e^{i \omega_1 \xi_1} e^{i\omega_2^{1,2}(\omega_1)\xi_2}.
\end{align*}
Let us now use this solution to calculate the unknown terms in the perturbed gradient \eqref{eq:gradientPert} on the boundary $\Gamma_o$, meaning that $\xi_2=0$. The perturbed gradient for boundary fitted coordinates is given by
\begin{align*}
df_D'[\alpha] = -\mu \partial_{\xi_2}\lambda_k'[\alpha] \partial_{\xi_2}v_k-\mu \partial_{\xi_2}\lambda_k \partial_{\xi_2}v_k'[\alpha].
\end{align*}
On the boundary, where $\xi_2=0$, we have that 
\begin{align*}
\partial_{\xi_2}v_k'[\alpha] = - \left(\partial_{\xi_2 \xi_2}v_k+\partial_{\xi_2}v_k i\left.\omega_2^{1,2}\right\vert_{\Gamma_o}\right) e^{i \omega \xi_1} =- (\partial_{\xi_2 \xi_2}v_k\pm\partial_{\xi_2}v_k\omega_1) e^{i \omega_1 \xi_1}, \\
\partial_{\xi_2}\lambda_k'[\alpha] = - \left(\partial_{\xi_2 \xi_2}\lambda_k+\partial_{\xi_2}\lambda_k i\left.\omega_2^{1,2}\right\vert_{\Gamma_o}\right) e^{i \omega \xi_1} = - (\partial_{\xi_2 \xi_2}\lambda_k\pm\partial_{\xi_2}\lambda_k\omega_1) e^{i \omega \xi_1},
\end{align*}
where we used the expression for $\omega_2^{1,2}$ on the boundary, which was given in \eqref{eq:omega12OnGamma}. If we now plug this into the perturbed gradient and assume that $\omega_2^p$ and $\omega_2^a$ have the same sign, we get
\begin{align*}
df_D'[\alpha] = \mu \left[ \partial_{\xi_2}v_k \partial_{\xi_2 \xi_2}\lambda_k + \partial_{\xi_2}\lambda_k \partial_{\xi_2 \xi_2}v_k \pm2\omega_1(\partial_{\xi_2} \lambda_k \partial_{\xi_2} v_k )\right] \alpha.
\end{align*}
Hence, we have that the Hessian response to a Fourier mode with frequency $\omega$ is
\begin{align*}
H[\alpha]:= df_D'[\alpha] = (\beta_1 + \beta_2 \omega_1 ) \alpha.
\end{align*}
If we transform $\beta_1$ and $\beta_2$ back into physical coordinates, we get
\begin{align*}
\beta_1 =& \mu \sum_{k = 1}^2 \partial_{\xi_2}v_k \partial_{\xi_2 \xi_2}\lambda_k + \partial_{\xi_2}\lambda_k \partial_{\xi_2 \xi_2}v_k \nonumber \\
=& \mu (\bm{n}\cdot\nabla) \left(\sum_{k = 1}^2 (\bm{n}\cdot\nabla)v_k (\bm{n}\cdot\nabla)\lambda_k \right) 
\end{align*}
and
\begin{align*}
\beta_2 = \pm 2\mu \sum_{k = 1}^2 (\bm{n}\cdot\nabla) \lambda_k (\bm{n}\cdot\nabla) v_k .
\end{align*}
Note that if we use $\omega_2 = i\omega_1$, the perturbation of the state variables will go to zero for $\xi_2\rightarrow\infty$. This solution is plausible, as a perturbation of the surface should not change the flow solution far away from the obstacle. Therefore, we choose the sign to be negative. \qed
\end{proof}
Before constructing a preconditioner with the derived Hessian information, let us take a closer look at several interesting properties of the problem. With the help of the symbol, we can see that a Newton-like preconditioner will be important when trying to solve the optimization problem efficiently: If we follow \cite{ta1995trends} and interpret the symbol of the Hessian as an approximation of the eigenvalues, we see that the eigenvalues will grow linearly by a factor of $\beta_2$. Due to the fact that a fine discretization allows high as well as low frequencies in the design space, we obtain small and large eigenvalues, leading to an ill-conditioned Hessian. Consequently, steepest-descent methods will suffer from poor convergence rates, see \cite[Chapter~3.3]{wright1999numerical}. Furthermore, the Hessian symbol reveals the following properties:
\begin{enumerate}[I]
\item \label{itm:phase} $H[\alpha]$ is a wave with the same phase and frequency $\omega$ as $\alpha$.
\item \label{itm:linScaling} as the frequency of $\alpha$ increases, the amplitudes of $H[\alpha]$ increase linearly (linear scaling)
\item \label{itm:beta1beta2Scaling} the scaling consists of a constant part $\beta_1$ given by \eqref{eq:beta1} and a linear part $\beta_2$, which can be calculated according to \eqref{eq:beta2}
\item \label{itm:nonConstantScaling} the amplitudes of $H[\alpha]$ vary along $\xi_1$ as $\beta_1(\xi_1)$ and $\beta_2(\xi_1)$ are non-constant functions
\item the inverse of the Hessian will damp frequencies by a factor of
\begin{align*}
\sigma_{H^{-1}} = \frac{1}{\beta_1 + \beta_2 \omega},
\end{align*}
meaning that the inverse Hessian, which we wish to use as a preconditioner has smoothing behavior.
\end{enumerate}
Before turning to the construction of a preconditioner, we investigate the applicability of the analytic results for convective flows. \\

\section{The discrete Hessian symbol}
\tikzstyle{block} = [rectangle,draw,minimum width=8em,align=center,rounded corners, minimum height=2em,scale=1.0]
\tikzstyle{connect} = [draw,-latex']
\label{sec:section4}
In the following, we wish to numerically reproduce the analytically derived symbol to test the applicability of the analytic Hessian behavior in the case of convective terms. The calculations are carried out with the SU2 flow solver, which incorporates an optimization framework. Information as well as test cases of the SU2 solver can for example be found in \cite{palacios2013stanford}.\\
We look at a cylinder as described in section \ref{sec:section6} placed inside a flow with a Reynolds number of $1$ as well as $80$. For our configuration, these choices of the Reynolds number are reasonable, since a higher Reynolds number will result in an unsteady von K\'{a}rm\'{a}n vortex street, meaning that the derivation of our optimization framework no longer holds. The task is to change the shape of the cylinder such that the drag is minimized. Hence, the optimization patch $\Gamma_o$ is the surface of the cylinder. Due to the fact that we do not want to focus on the optimization, but on the Hessian approximation and especially its response to certain Fourier modes, we first think of possibilities to numerically determine the Hessian matrix. One way to do so is by finite differences. If the perturbed optimization patch is given by
\begin{align}\label{eq:pertSurf}
\Gamma_o^\epsilon(\xi_1) := \Gamma_o(\xi_1) + \epsilon \alpha(\xi_1)\bm{n}(\xi_1),
\end{align}
the shape Hessian in direction $\alpha$ is given by
\begin{align*}
H[\alpha] = \lim_{\epsilon\rightarrow 0} \frac{df_D(\Gamma_o^\epsilon)- df_D(\Gamma_o)}{\epsilon},
\end{align*}
where $df_D(\Gamma_o^\epsilon)$ is the gradient evaluated for the flow around the perturbed optimization patch \eqref{eq:pertSurf}. The dependency on the direction $\bm{V}_k$ as defined in \eqref{eq:Vk} has been omitted for better readability. The Hessian can now be approximated with finite differences, i.e., instead of taking the limit, we choose a small value for $\epsilon$, yielding
\begin{align} \label{eq:diffH}
H^{FD}[\alpha] := \frac{df_D(\Gamma_o^\epsilon)- df_D(\Gamma_o)}{\epsilon}.
\end{align}
We expect the numerical results to coincide with the analytic derivation, which is why we wish to recover the Hessian properties \ref{itm:phase} to \ref{itm:nonConstantScaling}.
\subsection{Flow case 1: Re = 1}
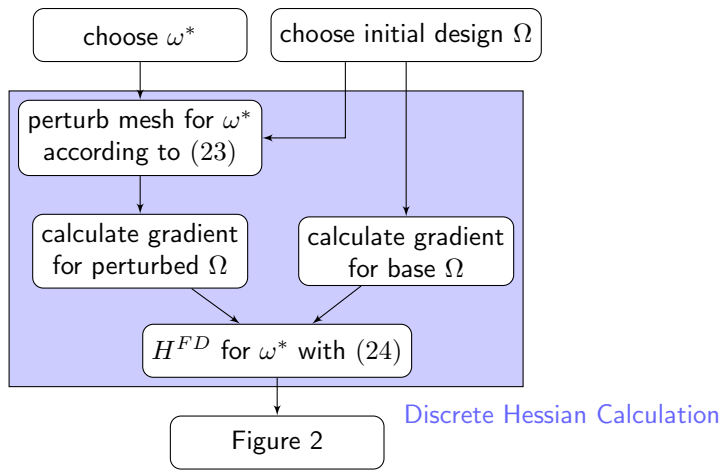
\begin{figure}[htbp!]
\begin{center}
\begin{tikzpicture}[node distance = 3.5cm,auto,font=\sffamily]
\node[block](id1){choose $\omega^*$};
\node[block, right of = id1](id2){choose initial design $\Omega$};
\node[block, below = 0.5cm of id1, fill=white](id3){perturb mesh for $\omega^*$\\ according to \eqref{eq:pertSurf}};
\node[block, below = 0.5cm of id3, fill=white](id4){calculate gradient \\ for perturbed $\Omega$};
\node[block, right of = id4, fill=white](id5){calculate gradient\\ for base $\Omega$};
\node[block, below left = 0.5cm and -1.5cm of id5,fill=white](id6){$H^{FD}$ for $\omega^*$ with \eqref{eq:diffH}};
\node[block, below = 0.5cm of id6](id7){Figure \ref{fig:inout}};
\begin{scope}[on background layer]
\node[fit =(id3)(id4)(id5)(id6), fill = blue!20, draw](box){};
\end{scope}
\node[below right = 0.1cm and -1.7cm of box]{\color{blue!60}{Discrete Hessian Calculation}};
\path[connect] (id1) -- (id3);
\path[connect] (id2.south) + (-0.8,0) |- (id3.east);
\path[connect] (id2) -- (id5);
\path[connect] (id3) -- (id4);
\path[connect] (id4) -- (id6);
\path[connect] (id5) -- (id6);
\path[connect] (id6) -- (id7);
\end{tikzpicture}
\end{center}
\caption{Scheme for computing the Hessian for specified frequency and design.}
\label{tz:DiscreteHessianResponse}
\end{figure}
\begin{figure}[htbp!]
  \centering
  \includegraphics[width=1.0\linewidth]{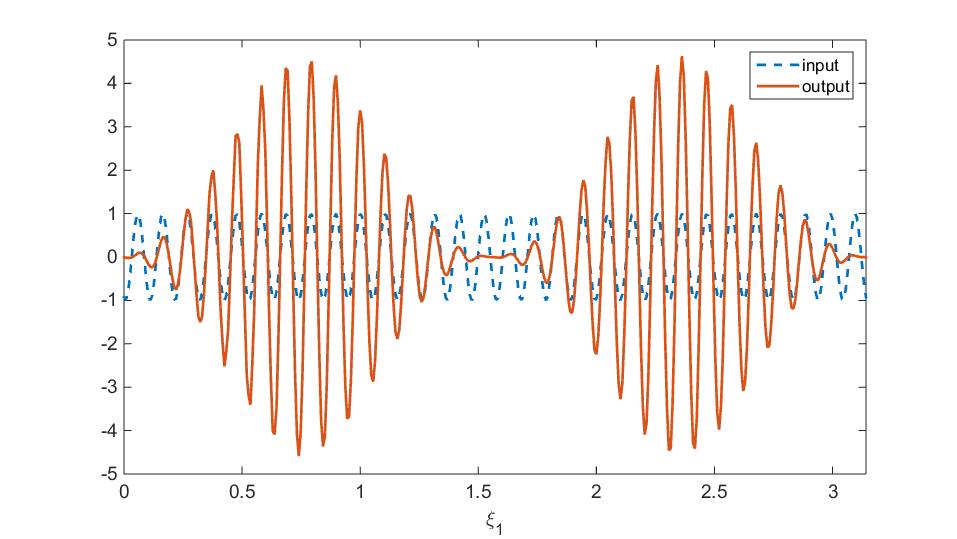}
  \caption{Scaled input $\alpha$ and resulting discrete Hessian $H^{FD}$ on the cylinder's surface.}
  \label{fig:inout}
\end{figure}
Let us start with a Reynolds number of $1$. The discrete Hessian response is calculated according to Figure \ref{tz:DiscreteHessianResponse} for $\omega^{*}=60$ and a two-dimensional cylinder as initial design. The resulting discrete shape Hessian can be seen in Figure \ref{fig:inout}. One can see that the Hessian structure coincides with the analytic results to the extent that the output will have the same phase and frequency as the input (property \ref{itm:phase}). Furthermore, we see that the Hessian will modify the amplitude of $\alpha$, which varies along the optimization patch. This behavior can also be deduced from the analytic derivation, as the non-constant derivatives of the primal and adjoint states affect the parameters $\beta_1$ and $\beta_2$ (property \ref{itm:nonConstantScaling}). 

\begin{figure}[htbp!]
\begin{center}
\begin{tikzpicture}[node distance = 3.5cm,auto,font=\sffamily]
\node[block](id0){choose $\xi_1^*$};
\node[block, below = 0.5cm of id0, fill=white](id1){choose $\omega_n$ for $n = 1,\cdots,N$ \\ s.t. $\sin(\omega_n\xi_1^*+s_n)\stackrel{!}{=}1$};
\node[block, right = 0.5cm of id0](id2){choose initial design $\Omega$};
\node[minimum height=2, below right = 0.5cm and -1.5cm of id1, fill = blue!20, draw](box){Discrete Hessian Calculation};
\node[block, left = 0.5cm of box, fill=white](out){Figure \ref{fig:Scalings}};
\node[block, below = 0.5cm of box, fill=white](out1){plot $H^{FD}$ at $\xi_1^*$ for all $\omega_n$};
\node[block, below = 0.5cm of out1, fill=white](out2){compute linear fit\\ for data points };
\node[block, left = 0.5cm of out2, fill=white](out3){Figure \ref{fig:AmplitudeScalingRe1}};
\node[block, below = 0.5cm of out2, fill=white](end){$\beta_1^{FD},\beta_2^{FD}$ at $\xi_1^*$};
\begin{scope}[on background layer]
\node[fit =(id1)(box)(out)(out1)(out2), fill = red!20, draw](newBox){};
\end{scope}
\node[below left = 0.1cm and -4.0cm of newBox]{\color{red!60}{Scaling Parameter Calculation}};
\path[connect] (id0) -- (id1);
\path[connect] (id1) -- (box);
\path[connect] (id2) -- (box);
\path[connect,dashed] (box.west) -- (out.east);
\path[connect] (box) -- (out1);
\path[connect] (out1) -- (out2);
\path[connect,dashed] (out1.west) -| (out3.north);
\path[connect,dashed] (out2) -- (out3);
\path[connect] (out2) -- (end);
\end{tikzpicture}
\end{center}
\caption{Scheme for calculating scaling parameters.}
\label{tz:SchemeScalingParameters}
\end{figure}
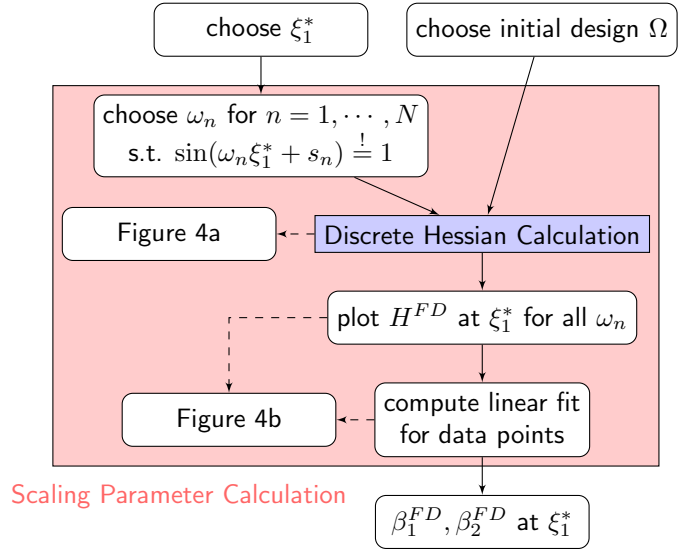
A detailed picture of how the amplitude depends on the input frequency at a fixed point $\xi_1^*$ can be obtained with the scheme depicted in Figure \ref{tz:SchemeScalingParameters}. We choose $N$ different frequencies such that the amplitudes of $\sin(\omega_n\xi_1+s_n)$ overlap at $\xi_1^{*}$. Note that we use a shift $s_n$ to allow choosing all frequencies. The resulting Hessian responses can be found in Figure \ref{fig:Scalings}.
\begin{figure}[htbp!]
        \centering
        \begin{subfigure}[b]{0.5\textwidth}
                \includegraphics[width=\textwidth]{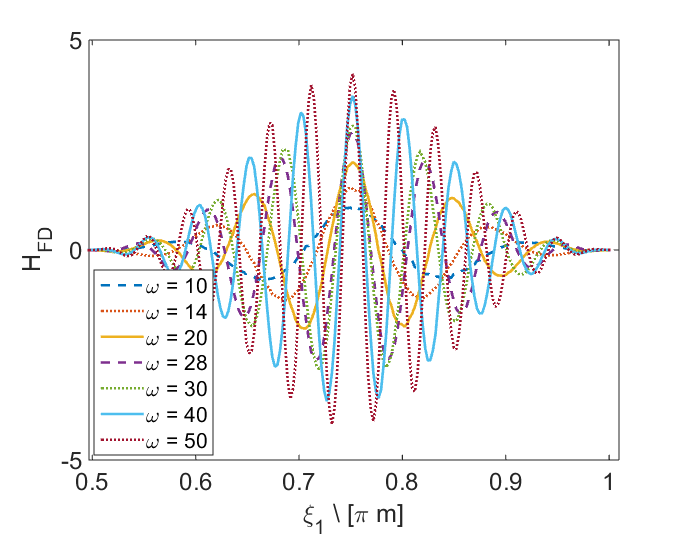} 
                \caption{Hessian responses on top cylinder.}
                \label{fig:Scalings}
        \end{subfigure}%
        ~ 
        \begin{subfigure}[b]{0.5\textwidth}
                \includegraphics[width=\textwidth]{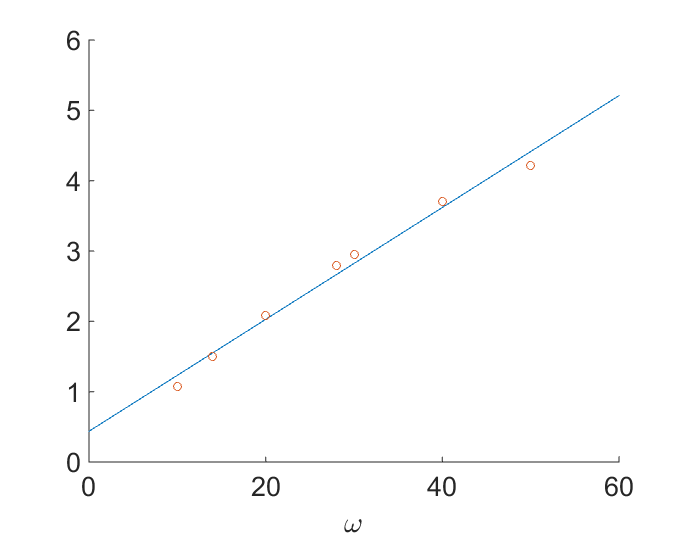}
                \caption{Amplitudes at $\xi_1 = \frac{3}{4}\pi$ with linear fit.}
                \label{fig:AmplitudeScalingRe1}
        \end{subfigure}
        ~ 
        \caption{Discrete Hessian responses to different input frequencies for $Re=1$.}
\end{figure}
We now evaluate the $N$ discrete Hessian responses at $\xi_1^*$. Plotting the different amplitudes over the corresponding input frequency $\omega$ and calculating a linear curve fit yields Figure \ref{fig:AmplitudeScalingRe1}. One can see that choosing a linear function will lead to a good approximation of the scaling behavior, indicating that the the numerical investigation matches the linear scaling of the analytically derived symbol (property \ref{itm:linScaling}). Note that the curve fit in Figure \ref{fig:AmplitudeScalingRe1} can be used to calculate the scaling parameters $\beta_1^{FD}$ (which is the fit at $\omega = 0$) as well as $\beta_2^{FD}$ (which is the slope of the fit) at $\xi_1=\frac{3}{4}\pi$. The superscript $FD$ denotes that the scaling parameters are obtained by the finite difference approximation and not by the analytic formulas \eqref{eq:beta1} and \eqref{eq:beta2}. 

\begin{figure}
\begin{center}
\begin{tikzpicture}[node distance = 3.5cm,auto,font=\sffamily]
\node[block](id0){choose $\xi_{1}^{(j)}$\\ for $j = 1,\cdots,M$};
\node[block, right = 0.5cm of id0](id1){choose initial design $\Omega$};
\node[minimum height=2, below right = 0.5cm and -1.5cm of id0, fill = red!20, draw](box){Scaling Parameter Calculation};
\node[block, below = 0.5cm of box](end){Figure \ref{fig:Re1Beta1Beta2}};
\path[connect] (id0) -- (box);
\path[connect] (id1) -- (box);
\path[connect] (box) -- (end);
\end{tikzpicture}
\end{center}
\caption{Scheme for calculating Figure \ref{fig:Re1Beta1Beta2}.}
\label{tz:SchemeBetaField}
\end{figure}
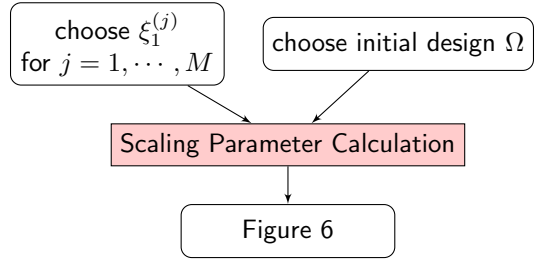

To calculate the scaling parameters $\beta_{1,2}^{FD}$ at several positions on the cylinder's surface, we repeat the previously described analysis for $M$ different choices of $\xi_1^*$, see \ref{tz:SchemeBetaField}. In Figure \ref{fig:Re1Beta1Beta2}, we compare the resulting scaling values with the continuous derivations \eqref{eq:beta1} and \eqref{eq:beta2}. Note that in order to minimize computational costs, we use only two frequencies, hence $N=2$.
\begin{figure}[htbp!]
  \centering
  \includegraphics[width=0.6\linewidth]{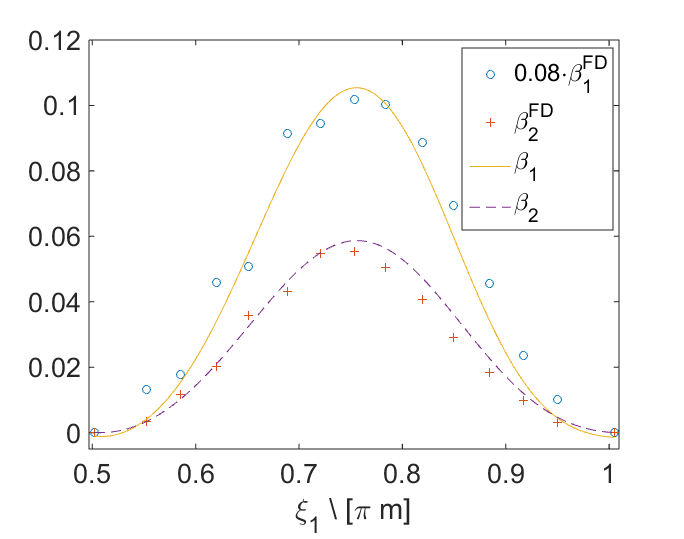}
  \caption{Comparison of the analytic result and the finite difference approximation of $\beta_{1,2}$ for $Re = 1$.}
  \label{fig:Re1Beta1Beta2}
\end{figure}
We can now see that the scaled values of $\beta_1^{FD}$ as well as the $\beta_2^{FD}$ values match the analytic results (property \ref{itm:beta1beta2Scaling}). Note that $\beta_1$ only coincides up to a factor of $0.08$, which is most likely caused by the poor approximation of second-derivatives of the flow solution.

As the properties of the finite difference approximation of the Hessian coincide with the analytic derivation, it is reasonable to use the analytic formulas of the $\beta$ values in order to calculate the preconditioner for small Reynolds number flows.

\subsection{Flow case 2: Re = 80}
We now turn to a flow with a Reynolds number of $80$. Again, we choose a surface perturbation $\alpha(\xi_1) = \cos(60(\xi_1-0.06))$ and investigate the resulting discrete Hessian, which can be seen in Figure \ref{fig:inoutRe80}. 
\begin{figure}[htbp!]
  \centering
  \includegraphics[width=1.0\linewidth]{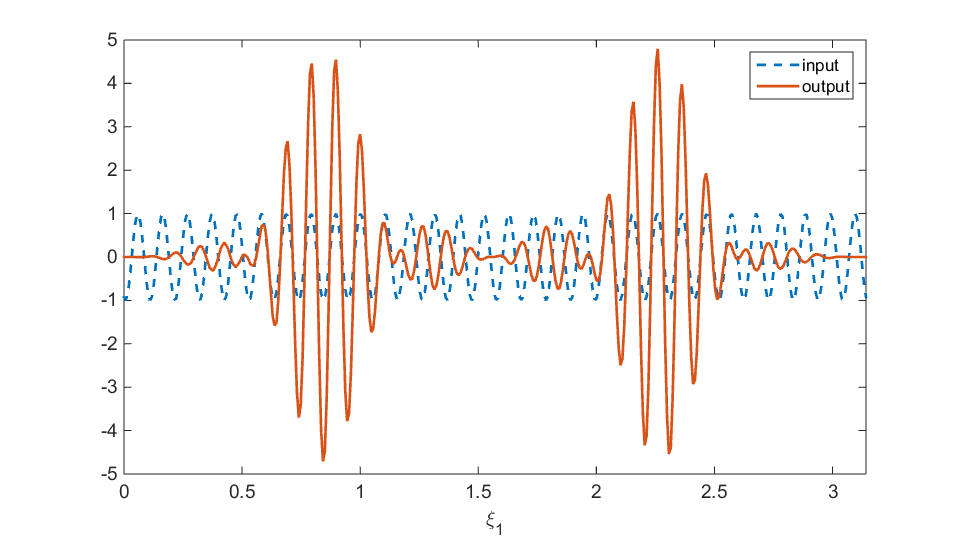}
  \caption{Scaled input $\alpha$ and resulting discrete Hessian $H^{FD}$.}
  \label{fig:inoutRe80}
\end{figure}
Just as in the first flow case, the input wave has the same phase as the outgoing Hessian signal (property \ref{itm:phase}). The amplitude of the output does again vary, meaning that we again have non-constant scaling parameters (property \ref{itm:nonConstantScaling}). The next step is to investigate how the output depends on the input frequency. We therefore study the output for several input frequencies and calculate a linear fit for the scaling of the amplitude, hoping that the linear analytic result will hold even though we no longer have negligible convective properties of the flow. The result of this fit at the spatial position $\xi_1 = \frac{3}{4}\pi$ can be found in Figure \ref{fig:AmplitudesRe80}.
\begin{figure}[htbp!]
        \centering
        \begin{subfigure}[b]{0.5\textwidth}
                \includegraphics[width=\textwidth]{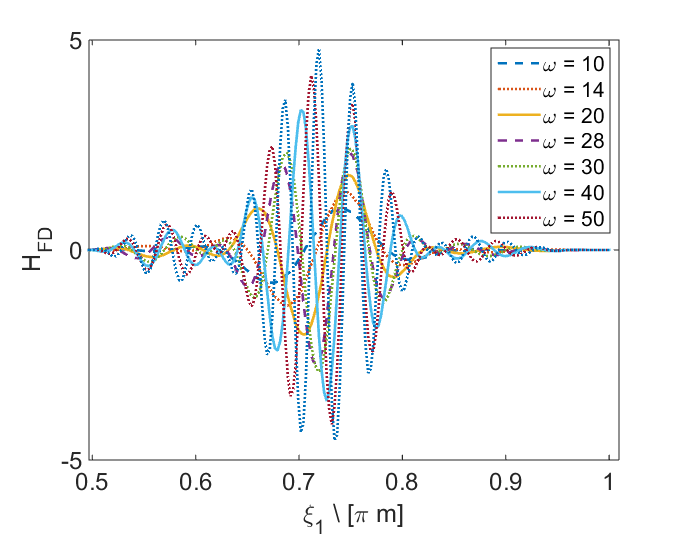} 
                \caption{Hessian responses on top cylinder.}
                \label{fig:ScalingsRe80}
        \end{subfigure}%
        ~ 
        \begin{subfigure}[b]{0.5\textwidth}
                \includegraphics[width=\textwidth]{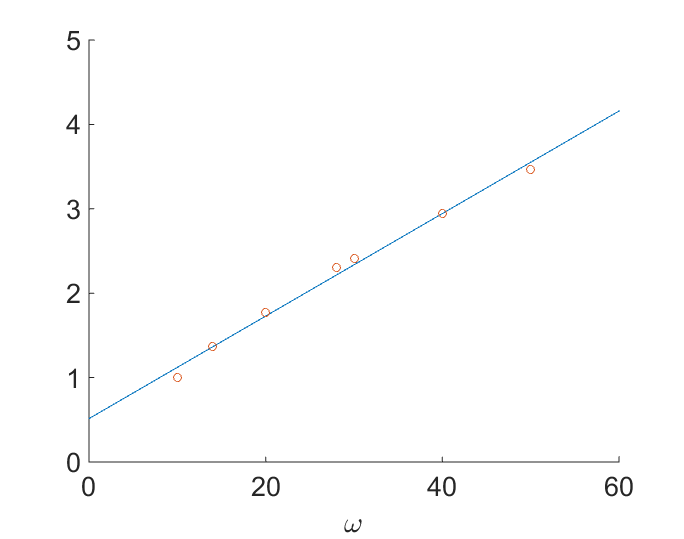}
                \caption{Amplitudes at $\xi_1 = \frac{3}{4}\pi$ with linear fit.}
                \label{fig:AmplitudesRe80}
        \end{subfigure}
        ~ 
        \caption{Discrete Hessian responses to different input frequencies for $Re=80$.}
\end{figure}
Fortunately, the results again point to a Hessian symbol with linear scaling (property \ref{itm:linScaling}). Repeating this computation for different values of $\xi_1$ yields Figure \ref{fig:Re80Beta1Beta2}.
\begin{figure}[htbp!]
  \centering
  \includegraphics[width=0.6\linewidth]{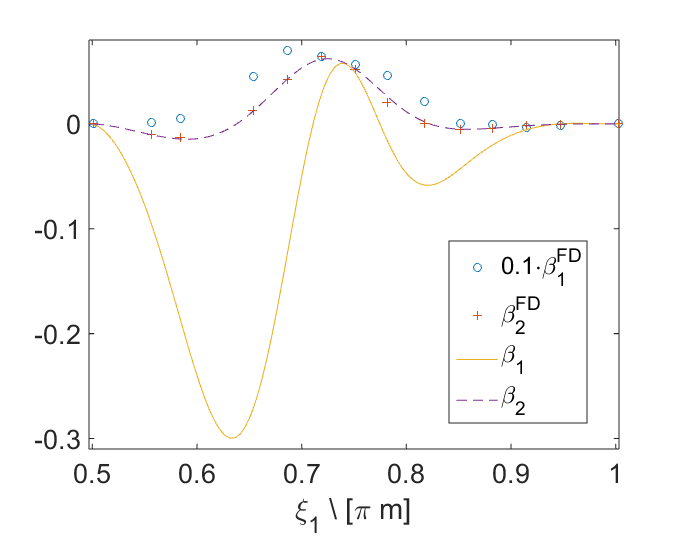}
  \caption{Comparison of the analytic result and the finite difference approximation of $\beta_{1,2}$ for $Re = 80$.}
  \label{fig:Re80Beta1Beta2}
\end{figure} 
It can be seen that the $\beta_2^{FD}$ values match the analytic result very well, whereas the $\beta_1^{FD}$ values do not coincide with the analytic predictions (property \ref{itm:beta1beta2Scaling} partially violated). Therefore, one can conclude that the convective flow behavior, which we did not include in the analytic derivations, will result in $\beta_1^{FD}$ values that do not correspond to $\beta_1$. However the analytic prediction of the scaling parameter $\beta_2$ can be used to mimic the Hessian behavior.

\section{Construction of the approximate Newton smoothing method}
\label{sec:section5}
Our aim is to use the scaling behavior, which we have investigated analytically and numerically to precondition and to smooth the search direction of our problem. Here, we need to distinguish between the low and higher Reynolds number cases, due to the fact that the numerical evaluation of $\beta_1$ did not coincide with the analytic prediction in the case of convective flow behavior. Let us for now assume that we know the values of $\beta_1$ and turn to several other problems arising when trying to determine a preconditioner. We start by using standard Hessian manipulation techniques as they can be found in \cite[Chapter~6.3]{wright1999numerical} to construct a modified Hessian $\bar{H}$, which is sufficiently positive definite. After that, we think of how to approximate this Hessian with a sparse and computationally cheap preconditioner $B$. Here, the main task will be to mimic pseudo-differential behavior.
\subsection{Hessian manipulation}
Let us start by pointing out that instead of using the Hessian, Newton's method uses the inverse Hessian, which has the inverse scaling behavior
\begin{equation}\label{eq:numericInvHessian}
H^{-1}[\alpha] = \frac{1}{\beta_1+\beta_2 \omega }\alpha.
\end{equation}
Our first step is to investigate the effect of this inversion, which can be found in Figure \ref{fig:inoutHInv} when using the analytic scaling parameters $\beta_{1,2}$ of a flow with a Reynolds number of $1$.
\begin{figure}[htbp!]
        \centering
        \begin{subfigure}[b]{0.5\textwidth}
                \includegraphics[width=\textwidth]{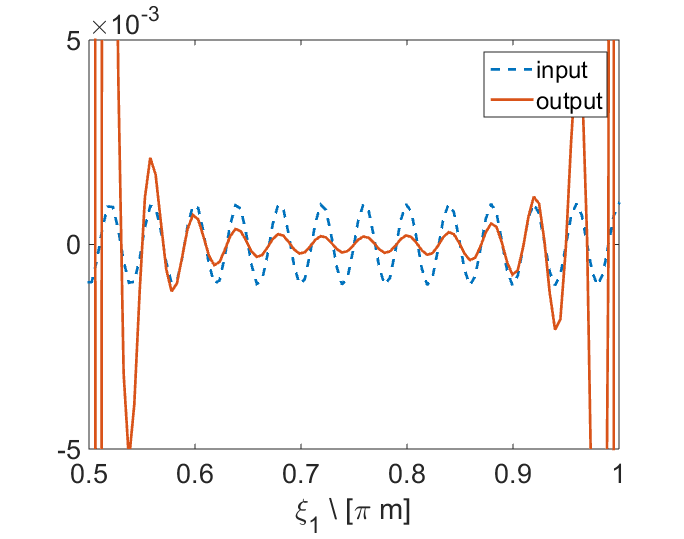}
                \caption{$H^{-1}[\alpha]$}
                \label{fig:inoutHInv}
        \end{subfigure}%
        ~ 
        \begin{subfigure}[b]{0.5\textwidth}
                \includegraphics[width=\textwidth]{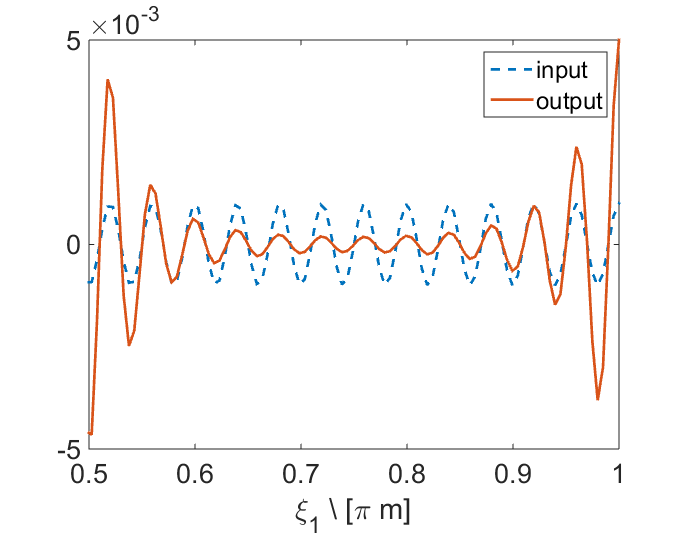}
                \caption{$\bar{H}^{-1}[\alpha], \eta = 0.2$}
                \label{fig:inoutHInvEps}
        \end{subfigure}
        ~ 
        \caption{Response of the original and modified preconditioner to the input $\alpha$, which is a wave with frequency $60$.}
\end{figure}
It is clear that the inverse Hessian will blow up for frequencies that fulfill
\begin{align*}
\beta_1+\beta_2\omega = 0.
\end{align*}
In our example, this behavior can be seen at the front and the rear of the cylinder. Furthermore, the scaling behavior of the Hessian can become negative, which can be interpreted as negative eigenvalues of the Hessian matrix. Hence, we need to take care of two problems frequently arising when trying to approximate a Hessian matrix, namely singularities as well as negative eigenvalues. As proposed in \cite{wright1999numerical}, we modify the Hessian symbol such that its inverse has the form
\begin{align*}
\bar{H}^{-1}[\alpha] := \frac{1}{\bar{\beta_1}+\left\vert\beta_2\right\vert \omega  }\alpha,
\end{align*}
where 
\begin{equation}\label{eq:choiceBeta1}
\bar{\beta_1}:=\eta+\beta_1-\min\left\{0,\min_{\xi_1}{\beta_1}\right\}.
\end{equation}
The regularization parameter $\eta$ is chosen to prevent singularities of the inverse symbol and to ensure that all eigenvalues of the Hessian will be sufficiently positive. Let us now use $\bar{H}^{-1}$ to precondition the search direction, which for now is a Fourier mode with frequency $60$. When taking a look at the output in Figure \ref{fig:inoutHInvEps}, we see that the Hessian behavior will only be affected in the critical regions of the optimization patch, namely the front and rear. We will further justify this modification of the Hessian by investigating its effects on the final preconditioner.
However, we first think of how one can calculate the Hessian when not using a Fourier mode as input. 
\subsection{Approximation of the pseudo-differential operator}
Our goal is to find a computationally cheap preconditioner $B^{-1}$ with the symbol $\sigma_{B^{-1}}$, which approximates the symbol of the inverted modified Hessian $\bar{H}^{-1}$. The two main properties of the symbol, namely its scaling and no phase shift belong to so-called pseudo-differential operators. Solving equations containing pseudo-differential operators is time consuming, which is why we use a different approach. To ensure a sparse and computationally cheap Hessian, we make use of differential operators. Using operators of even order will prevent a phase shift, however yields incorrect scaling. The chosen operator is
\begin{align*}
\bar{H}^{-1}[\alpha] \approx B^{-1}[\alpha] :=\left( \bar{\beta_1} + \epsilon \partial_{\xi_1 \xi_1} \right)^{-1} \alpha.
\end{align*}
This operator can easily be evaluated, however its symbol is
\begin{align}\label{eq:correctSymbol}
\sigma_{B^{-1}} = \frac{1}{\bar{\beta_1} - \epsilon \omega^2},
\end{align}
whereas the symbol which we wish to approximate is
\begin{align}\label{eq:approximatedSymbol}
\sigma_{\bar{H}^{-1}} = \frac{1}{\bar{\beta_1} + \beta_2 \omega}.
\end{align}
We will mimic the correct scaling by choosing $\epsilon$ such that the symbol of the preconditioner $\sigma_{B^{-1}}$ will be similar to the correct symbol $\sigma_{\bar{H}^{-1}}$. Before deriving a strategy to pick $\epsilon$, we apply the preconditioner $B$ to Newton's method: The preconditioned search direction $\bm{p}$ is given by
\begin{align*}
\bm{p}(\xi_1) = -\left( \bar{\beta_1}(\xi_1) + \epsilon(\xi_1) \partial_{\xi_1 \xi_1} \right)^{-1}\bm{df}(\xi_1),
\end{align*}
which is the continuous version of the Newton update \eqref{eq:NewtonDirection} when using the derived Hessian approximation. Discretizing this differential equation on the given mesh nodes of the optimization mesh yields
\begin{equation}\label{eq:discretePrecond}
\bar{\beta}_{1,j}p_{j} +  \frac{\epsilon_j}{\Delta\xi_1^2}\left( p_{j-1} -2p_{j}+p_{j+1} \right) = df_j
\end{equation}
with $j = 1,\cdots,N$. Since this discretized equation is linear in $\bm{p}$, we can rewrite it as
\begin{align}\label{eq:localPreconditioning}
\bm{p} = -\bm{B}^{-1}\bm{df},
\end{align}
where the matrix $\bm{B}$ is given by
\begin{align*}
\begin{pmatrix}
\bar{\beta}_{1,1}-\frac{2\epsilon_{1}}{\Delta\xi_1^2} & \frac{\epsilon_{1}}{\Delta\xi_1^2} & 0 &  &  & \dots &  & 0 & \frac{\epsilon_{1}}{\Delta\xi_1^2}      \\
\frac{\epsilon_{2}}{\Delta\xi_1^2}	& \bar{\beta}_{1,2}-\frac{2\epsilon_2}{\Delta\xi_1^2} & \frac{\epsilon_2}{\Delta\xi_1^2}	& 0 &  & & & \dots & 0 	  \\
0 & \frac{\epsilon_3}{\Delta\xi_1^2}	& \bar{\beta}_{1,3}-\frac{2\epsilon_3}{\Delta\xi_1^2} & \frac{\epsilon_3}{\Delta\xi_1^2}	& 0 & & & \dots  & 0 	  \\
\vdots	& &	& \ddots & & & & & \vdots \\
0	& &	&  & & & & &  \\
\frac{\epsilon_{N}}{\Delta\xi_1^2} 	& 0 & \dots & & & & 	 & \frac{\epsilon_{N}}{\Delta\xi_1^2}	& \bar{\beta}_{1,N}-\frac{2\epsilon_{N}}{\Delta\xi_1^2}
\end{pmatrix},
\end{align*}
when assuming periodic boundary conditions and the gradient $\bm{df}$ is the collection of gradient values on every surface node as defined in \eqref{eq:GradientVector}. 

We now return to choosing the smoothing parameter $\epsilon$ such that the correct and approximated symbols \eqref{eq:approximatedSymbol} and \eqref{eq:correctSymbol} match for relevant frequencies. Note that $\epsilon$ has been discretized in \eqref{eq:discretePrecond}. Hence, it remains to pick $\epsilon_j$ for $j = 1,\cdots,N$. For this task, we need to determine frequencies in the gradient vector $\bm{df}$. We use the discrete Fourier transform
\begin{align*}
df_k = \frac{1}{N}\sum_{l = 0}^{N-1}\hat{df}_l\text{exp}\left(\frac{i2\pi k l}{N}\right)
\end{align*}
to determine frequencies with amplitude $\hat{df}_l$ in $\bm{df}$. These amplitudes can be calculated by 
\begin{align*}
\hat{df}_k = \sum_{l = 0}^{N-1} df_l \text{exp}\left(\frac{i2\pi k l}{N}\right).
\end{align*}
Note that these frequencies are global. Frequencies can be localized by multiplying a discrete window function $g$, yielding
\begin{align}\label{eq:windowedFourier}
\tilde{df}_{m,l} = \sum_{k = 0}^{N-1}df_k g_{k-m}\text{exp}\left(\frac{-i2\pi k l}{N}\right).
\end{align}
This representation is common in signal compression and is called a discrete windowed Fourier transform. For further details can be found in \cite[Chapter~4.2.3]{mallat1999wavelet}.

The discrete values of $\bm{\epsilon}$ are now determined by minimizing the distance between the response of the windowed Fourier transform to the correct and approximated symbol
\begin{equation}\label{eq:minScale}
\epsilon_j = \argmin_{\varepsilon} \sum_{k=0}^{N-1} \tilde{df}_{j,k}^2\left(\frac{1}{\bar{\beta}_{1,j} - \varepsilon \omega_k^2 } -\frac{1}{\bar{\beta}_{1,j} + \vert\beta_{2,j}\vert \omega_k }\right)^2.
\end{equation}
Since we have localized the frequencies, we are able to pick the smoothing parameter in a given spatial cell $j$ such that the approximated matches the correct scaling for frequencies that are dominant in cell $j$. The optimal value of this parameter in cell $j$ is then denoted by $\epsilon_j$. The constructed preconditioner  is depicted in Figure \ref{tz:ApproxNewtonSmoothing}.

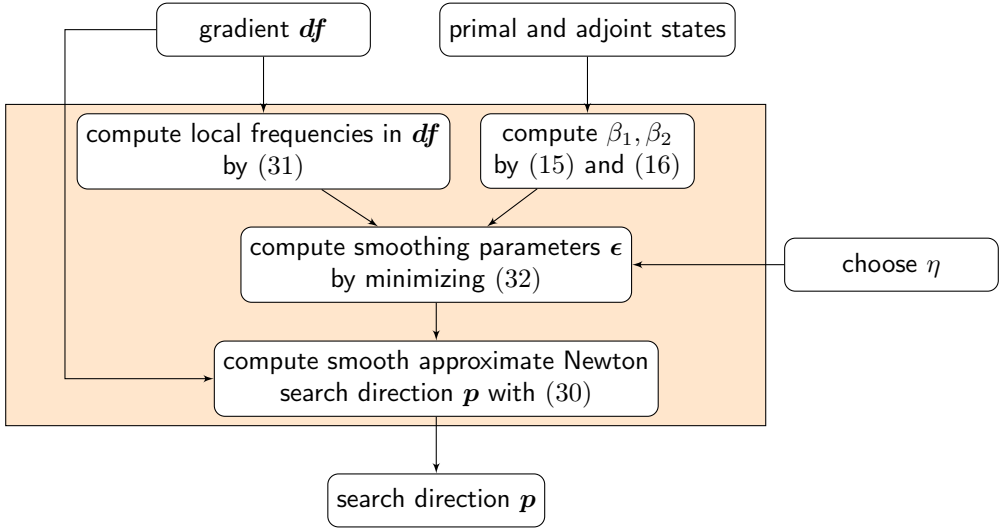
\begin{figure}
\begin{center}
\begin{tikzpicture}[node distance = 3.5cm,auto,font=\sffamily]
\node[block](grad){gradient $\bm{df}$};
\node[block, right =0.9cm of grad](states){primal and adjoint states};
\node[block, below = 0.75cm of grad, fill=white](FF){compute local frequencies in $\bm{df}$ \\ by \eqref{eq:windowedFourier}};
\node[block, below = 0.75cm of states, fill=white](beta){compute $\beta_1,\beta_2$ \\ by \eqref{eq:beta1} and  \eqref{eq:beta2}};
\node[block, below left = 0.5cm and -2.0cm of beta, fill=white](epsilon){compute smoothing parameters $\bm{\epsilon}$ \\ by minimizing \eqref{eq:minScale}};
\node[block, right = 2.0cm of epsilon, fill=white](eta){choose $\eta$};
\node[block, below = 0.5cm of epsilon, fill=white](p){compute smooth approximate Newton \\ search direction $\bm{p}$ with \eqref{eq:localPreconditioning}};
\begin{scope}[on background layer]
\node[minimum width=10cm, fit =(FF)(beta)(epsilon)(p), fill = orange!20, draw](newBox){};
\end{scope}
\node[block, below = 0.75cm of p](output){search direction $\bm{p}$};
\path[connect] (grad) -- (FF);
\path[connect] (states) -- (beta);
\path[connect] (beta) -- (epsilon);
\path[connect] (FF) -- (epsilon);
\path[connect] (eta) -- (epsilon);
\path[connect] (epsilon) -- (p);
\path[connect] (grad.west) -| node [near start] {} ([xshift=-1.2cm] grad.west)
						  |- ([xshift=-.0cm] p.west);
\path[connect] (p) -- (output);
\end{tikzpicture}
\end{center}
\caption{Approximate Newton smoothing method}
\label{tz:ApproxNewtonSmoothing}
\end{figure}

The presented preconditioner is similar to common Sobolev smoothing: The Sobolev-smoothed search direction $p_S$ is obtained by solving
\begin{align}\label{eq:SobolevSmoothing}
p_S(\xi_1) = -\left( 1 + \tilde{\epsilon} \partial_{\xi_1 \xi_1} \right)^{-1}df(\xi_1).
\end{align}
In contrast to the presented method, the smoothing parameter $\tilde{\epsilon}$ is usually obtained by a parameter study. In our method, we pick the spatially dependent smoothing parameter $\epsilon$ such that we mimic Hessian behavior. Hence, the introduced smoothness is chosen locally such that the optimization process is accelerated. Therefore, we call the new method \textit{local smoothing}, whereas common Sobolev Smoothing is called \textit{global smoothing} in the following.

We now investigate the effects of the Hessian manipulations introduced by the modification of the scaling parameters $\beta_1$ and $\beta_2$. Remember that our goal was to construct a sufficiently positive definite preconditioner, which means that the smallest eigenvalue $\mu^*$ fulfills
\begin{align*}
\mu^* \geq \delta > 0.
\end{align*}
We can estimate the eigenvalues of the preconditioning matrix $\bm{B}$ with the help of the Gershgorin circle theorem, which states that
\begin{align*}
\vert \mu - B_{kk} \vert < \sum_{j\neq k} \vert B_{kj}\vert.
\end{align*}
Therefore, we have that
\begin{align*}
\mu \in \bigcup_{k} \left( \bar{\beta}_{1,k}-2\frac{\epsilon_k}{\Delta \xi_1^2} - 2\frac{\vert\epsilon_k\vert}{\Delta \xi_1^2}, \bar{\beta}_{1,k}-2\frac{\epsilon_k}{\Delta \xi_1^2} + 2\frac{\vert\epsilon_k\vert}{\Delta \xi_1^2} \right) .
\end{align*}
Note that since we are using $\vert\beta_1\vert$ as scaling parameter, we know that $\epsilon_k < 0 $, which means that we have
\begin{align*}
\mu \in \bigcup_{k} \left(\bar{\beta}_{1,k}, \bar{\beta}_{1,k}-4\frac{\epsilon_k}{\Delta \xi_1^2} \right).
\end{align*}
Remember that in \eqref{eq:choiceBeta1} the modified scaling parameter $\bar{\beta_1}$ was chosen such that $\bar{\beta_1} > \eta$, meaning that the regularization parameter $\eta$ can be understood as the minimal eigenvalue of the preconditioner. Hence the regularization parameter should be chosen sufficiently large, meaning that $\eta \geq \delta$. As a result we can easily control the lower bound of the minimal eigenvalue. This further motivates the Hessian modifications we used. We now use the preconditioner, which we constructed to optimize the design of a cylinder inside a flow. 

\section{Results}
\label{sec:section6}
In the following, the results of the optimization when using common preconditioner \eqref{eq:localPreconditioning} will be presented and compared to the common choice of Sobolev smoothing \eqref{eq:SobolevSmoothing} with a constant smoothing parameter $\tilde\epsilon$, which we call global preconditioning. A good smoothing parameter for the global method, has been determined by investigating the grid resolution as done in \cite{schmidt2009impulse}. As discussed, the local methods picks the smoothing parameter automatically by minimizing \eqref{eq:minScale}. The task is to minimize the drag of a two dimensional cylinder, which is placed inside a fluid. To prevent the methods from simply decreasing the volume of the cylinder in order to achieve a minimization of the drag, we employ a volume constraint, which ensures a constant obstacle volume. The radius of the cylinder is one meter. We use a farfield density of $998.2\frac{kg}{m^3}$ and a farfield velocity of $10^{-5}\frac{m}{s}$ for the Reynolds number of $1$ and a velocity of $6.4\cdot 10^{-5}\frac{m}{s}$ for the Reynolds number of $80$. The chosen viscosity is $0.798\cdot 10^{-3}\frac{Ns}{m^2}$.
\subsection{Flow case 1: Re = 1}
We first look at the flow with $Re=1$, where we are confident to use the analytic form of $\beta_2$ and especially $\beta_1$. The regularization parameter $\eta$ in \eqref{eq:choiceBeta1} is set to $0.2$. As commonly done in Newton's method, we use a step length of $1.0$ for the local method. When comparing the first design update of the local and the global method when using a step length of $1$, one observes that the global method is penalized as the design update is much smaller. This is why we scale the step size of the global method, such that the magnitude of the design change will be of the same size for both methods in the first design step, see Figure \ref{fig:searchIt1Re1}. Let us now compare the optimization histories of local and global preconditioning. In Figure \ref{fig:fRe1}, we can see that using the analytic derivation of the scaling parameters $\beta$ as well as the information on the local frequencies inside the gradient will lead to a speedup, compared to the common global preconditioner. Whereas global preconditioning needs $15$ iterations to decrease the drag by roughly six percent, the local method will reach this reduction after nine iterations. A comparison of the flow field before and after the optimization can be found in Figure \ref{fig:v1OptNewMethodRe1}.
\begin{figure}[htbp!]
        \centering
        \begin{subfigure}[b]{0.55\textwidth}
                \includegraphics[width=\textwidth]{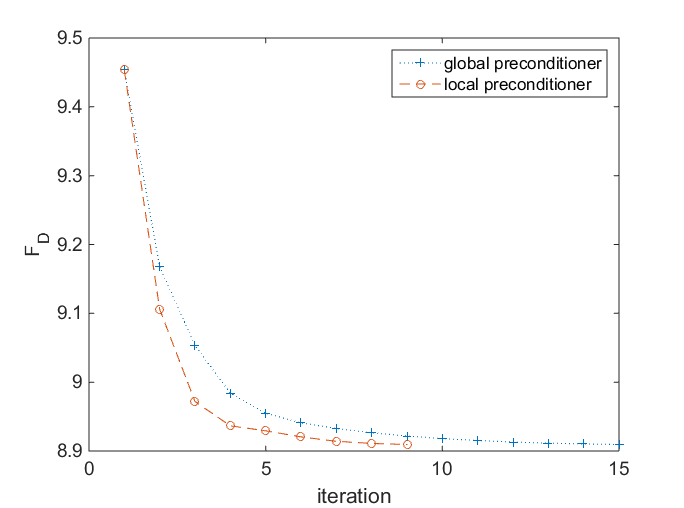} 
                \caption{Optimization history}
                \label{fig:fRe1}
        \end{subfigure}%
        \begin{subfigure}[b]{0.55\textwidth}
                \includegraphics[width=\textwidth]{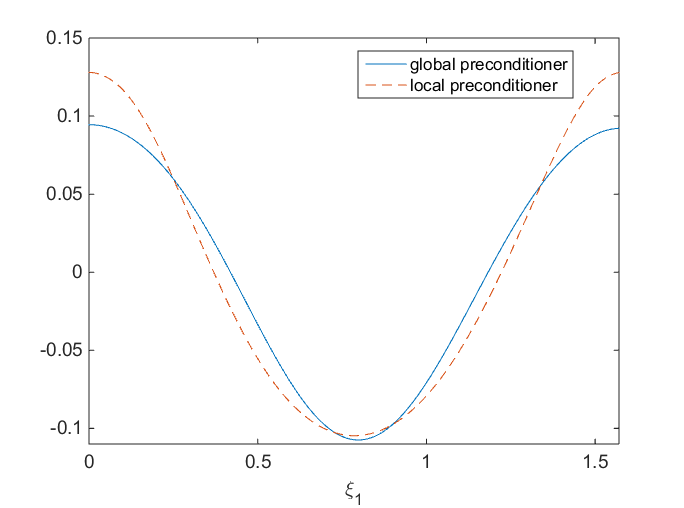}
                \caption{Scaled search direction iteration $1$.}
                \label{fig:searchIt1Re1}
        \end{subfigure}
        \caption{Comparison of standard and local preconditioning for $Re=1$.}
\end{figure}
\begin{figure}[htbp!]
  \centering
  \includegraphics[width=0.7\linewidth]{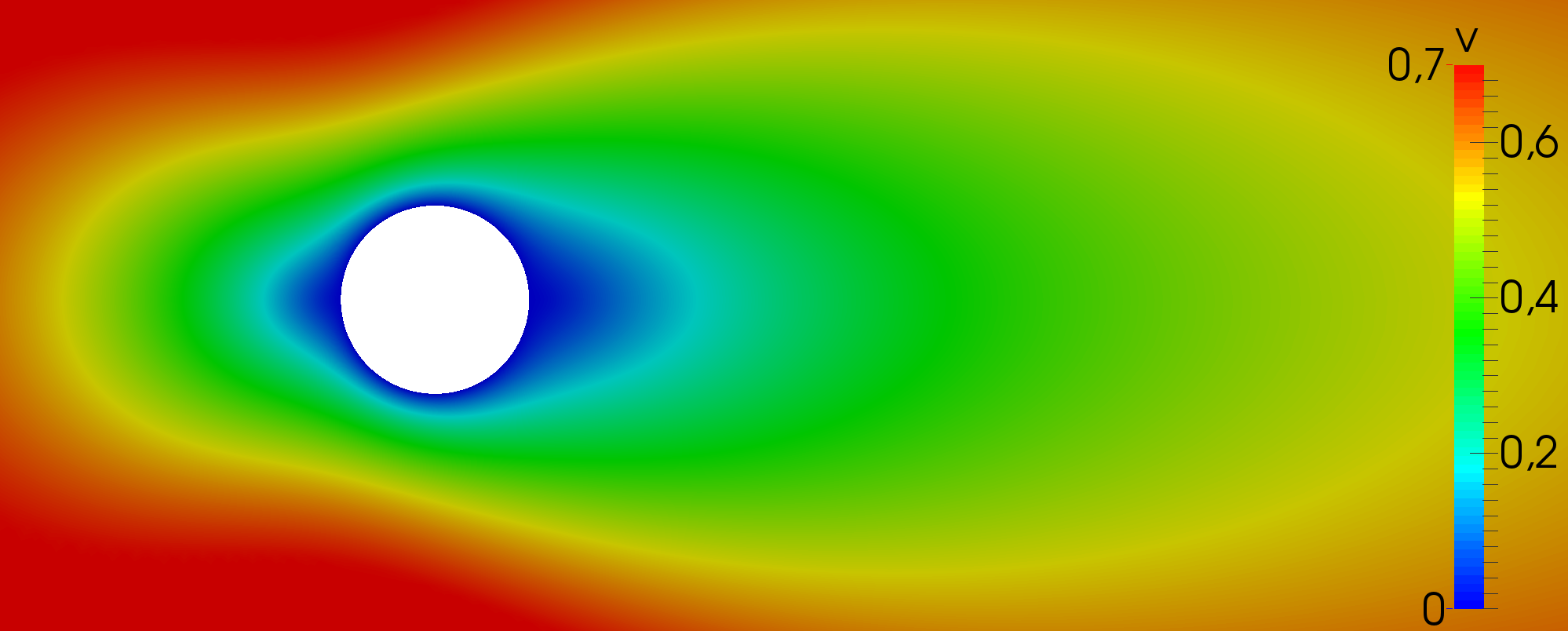}
  \includegraphics[width=0.7\linewidth]{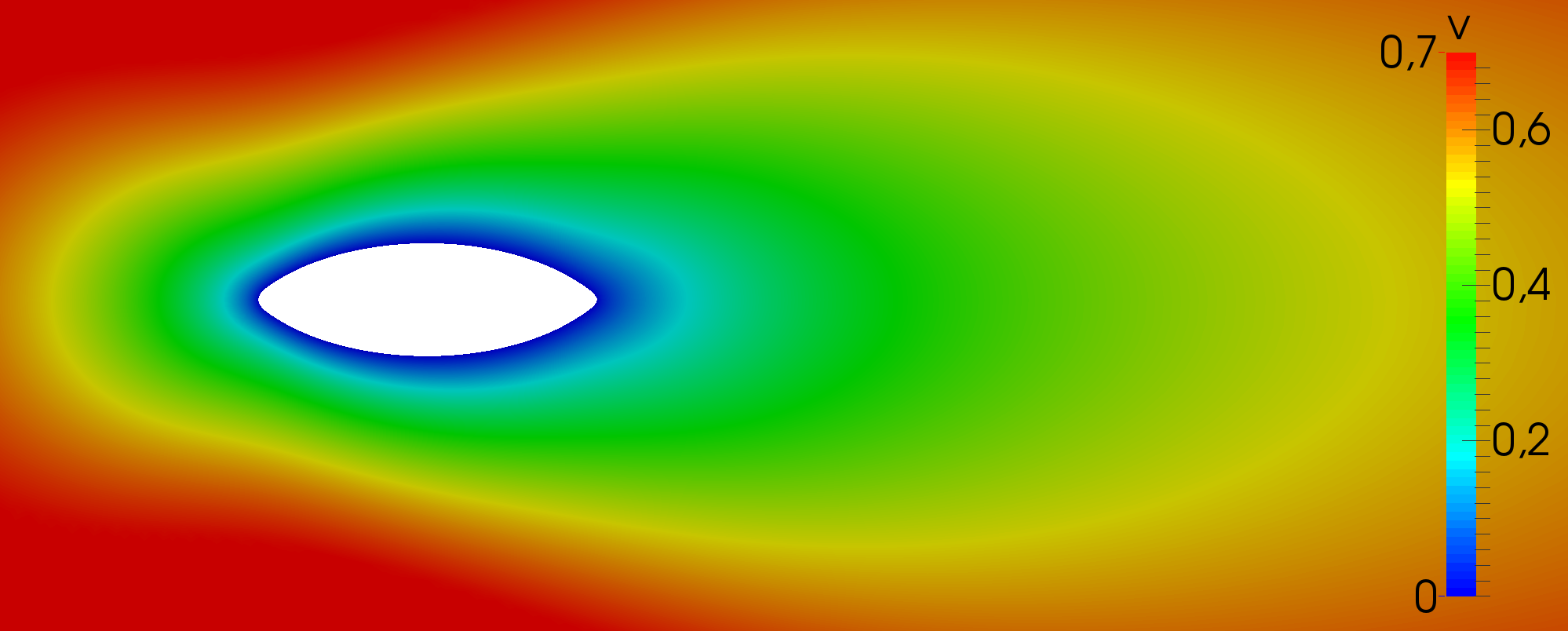}
    \caption{Zoomed view of the flow solution. Top: Original design and velocity magnitude. Bottom: Locally optimized design and velocity magnitude.}
  \label{fig:v1OptNewMethodRe1}
\end{figure}
\subsection{Flow case 2: Re = 80}
We now turn to the more complicated convective case, where we choose a Reynolds number of $80$. Again, we modify the surface of a cylinder in order to reduce the drag. Remembering the numerical investigation of the Hessian matrix for such a flow, it is obvious, that we cannot use the analytic values of $\beta_1$ as scaling parameter, whereas the values of $\beta_2$ fit the analytic prediction. A reasonable choice for $\beta_1$ is a scaled and smoothed version of $\beta_2$, which can be seen when looking at the numeric results of Figure \ref{fig:Re80Beta1Beta2}. Therefore, we choose $\beta_1 = 10\beta_2$, where the scaling of $10$ is motivated by the value $\beta_1^{FD}$, which we calculated when choosing multiple frequencies, see Figure \ref{fig:ScalingsRe80}. Furthermore, we use Sobolev smoothing with a very small choice of the smoothing parameter $\epsilon\approx 10^{-4}$ to ensure a smooth scaling parameter $\beta_1$. The regularization parameter $\eta$ is chosen as in the first flow case, meaning that a value of $0.2$ is taken. Due to the fact that we wish to use a constant step length for the optimization process, we use a step length of $0.5$ for the local preconditioner and scale the search direction proposed by the global preconditioner such that both search directions are of the same size in the first optimization step. A comparison of the scaled search directions can be found in Figure \ref{fig:searchIt1Re80}. 
\begin{figure}[htbp!]
        \centering
        \begin{subfigure}[b]{0.55\textwidth}
                \includegraphics[width=\textwidth]{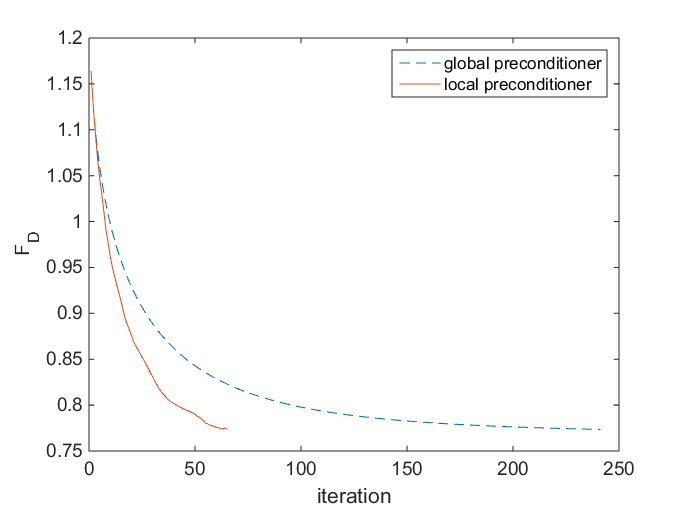} 
                \caption{Optimization history}
                \label{fig:fRe80}
        \end{subfigure}%
        \begin{subfigure}[b]{0.55\textwidth}
                \includegraphics[width=\textwidth]{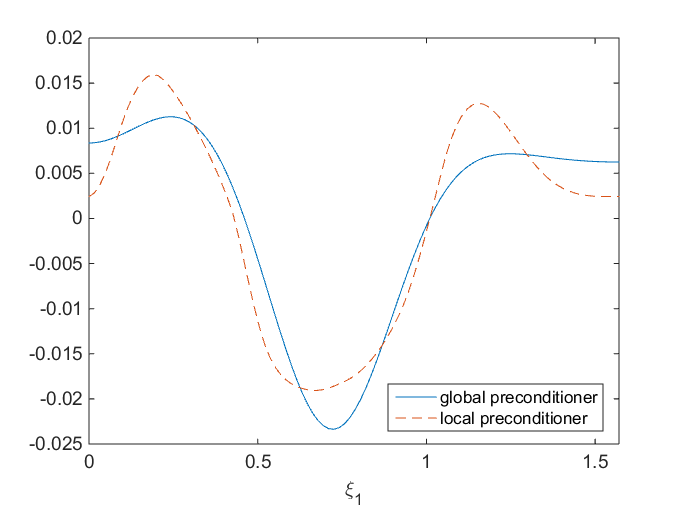}
                \caption{Scaled search direction iteration $1$.}
                \label{fig:searchIt1Re80}
        \end{subfigure}
        \caption{Comparison of standard and local preconditioning for $Re=80$.}
\end{figure}
\begin{figure}[htbp!]
  \centering
  \includegraphics[width=0.7\linewidth]{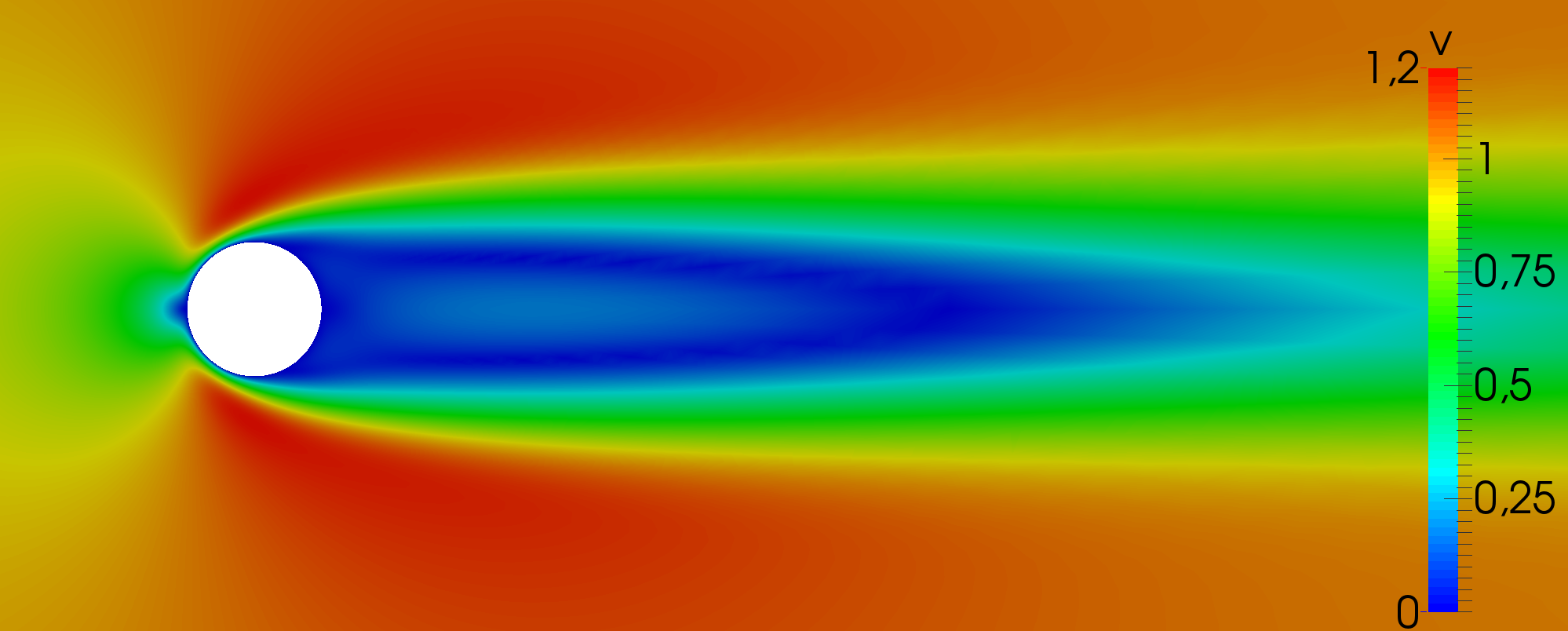}
  \includegraphics[width=0.7\linewidth]{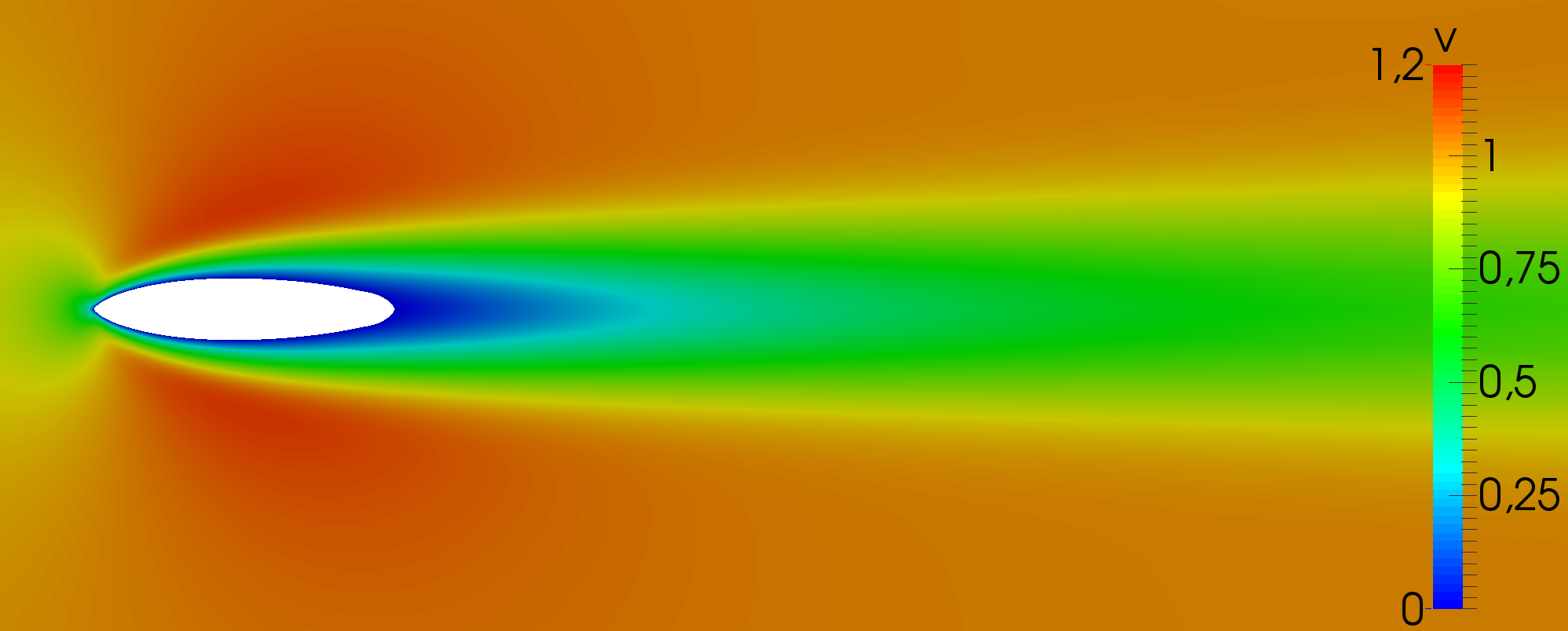}
  \caption{Zoomed view of the flow solution. Top: Original design and velocity magnitude. Bottom: Locally optimized design and velocity magnitude.}
  \label{fig:v1OptNewMethodRe80}
\end{figure}
\begin{figure}[htbp!]
  \centering
  \includegraphics[width=0.8\linewidth]{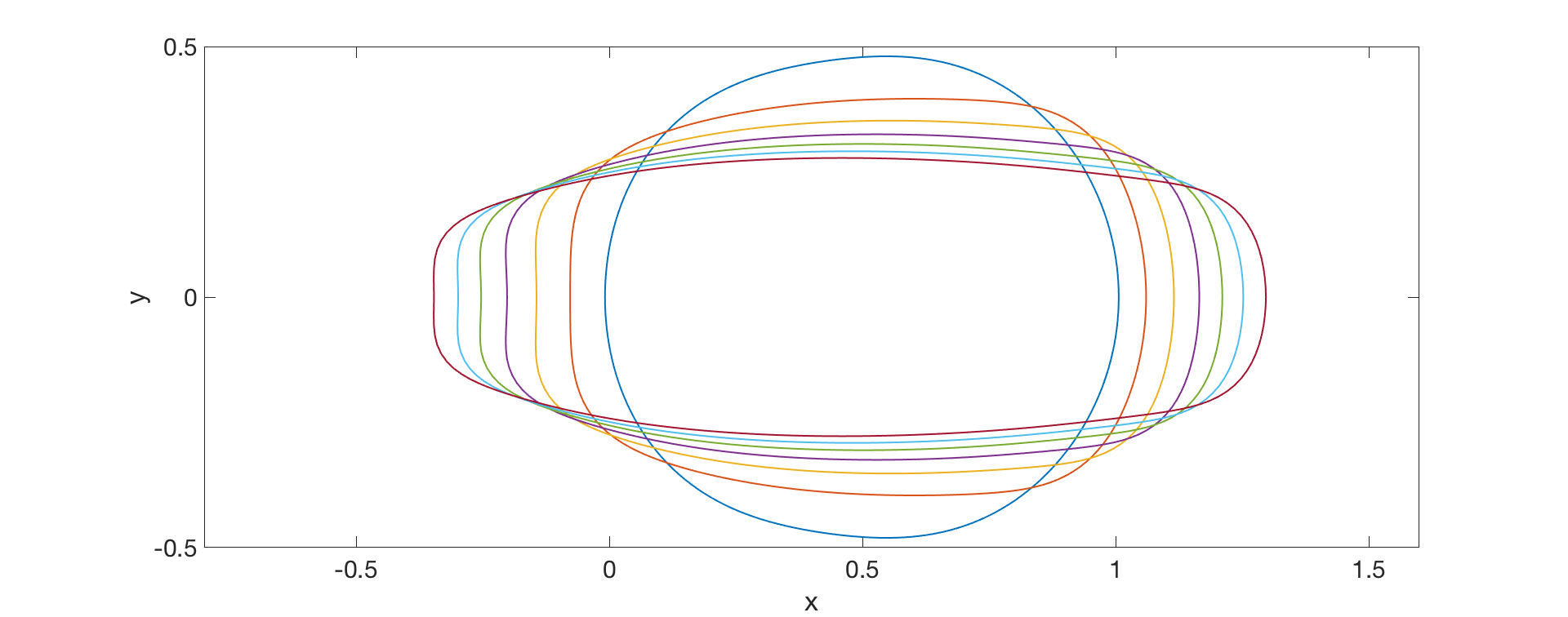}
  \includegraphics[width=0.8\linewidth]{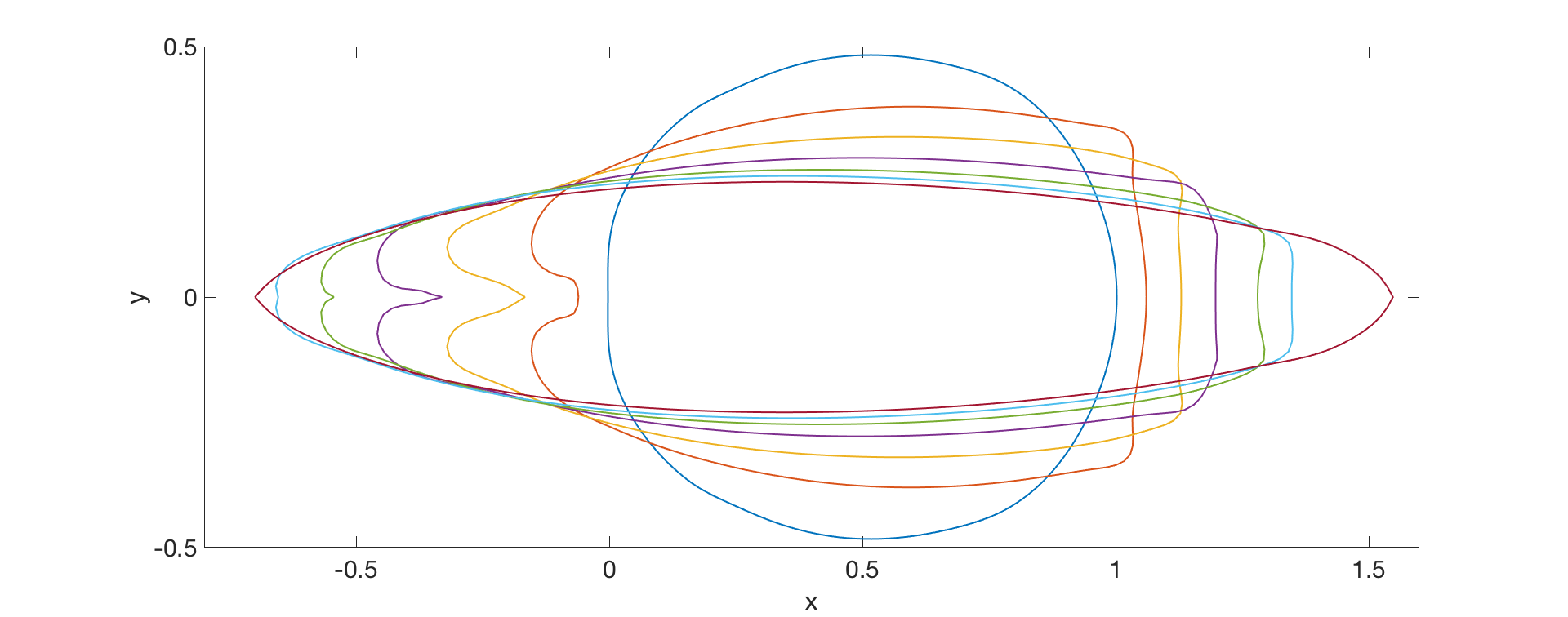}
  \caption{Obstacle designs at iteration $1,10,20,\cdots ,60$. Top: Global preconditioning. Bottom: Local preconditioning.}
  \label{fig:optimizationDesign}
\end{figure}
During the optimization process, we see that the local preconditioner will focus on creating an optimal front and rear, whereas the global preconditioner will heavily modify the top and bottom of the obstacle. Let us now compare the optimization histories of the local and the global preconditioner, which can be seen in Figure \ref{fig:fRe80}. We see that both methods are able to heavily decrease the drag by more than $33$ percent. While the local preconditioner will reach this reduction after $62$ iterations, the global preconditioner needs $240$ iterations to reach the same drag value. It is important to note that the local preconditioner will not further decrease the drag value after iteration $62$, since the step size is too big. A smaller step size will further decrease the drag value, however we wish to perform our optimization with a constant step length, which is why subsequent drag values do not appear in the optimization history. The global method is stopped after iteration $240$ as the norm of the gradient will fall to almost zero. Let us now take a closer look at the optimized design, which can be found in Figure \ref{fig:v1OptNewMethodRe80}. We can see that the optimization process will create a sharp front and rear, as well as a smooth top and bottom. Comparing the design histories of the global and local method in Figure \ref{fig:optimizationDesign}, we can see that the local preconditioner will focus on creating an optimal front and rear, whereas the global preconditioner will heavily modify the top and bottom of the obstacle. Note that the local method results in an inverted front, in the first design steps. However, the ability to choose non-smooth deformations is advantageous in this problem as a sharp edge is allowed to form. A disadvantage of non-smooth deformations is that it can lead to complex meshes, which is why robust mesh deformation tools need to be employed. 

\section{Summary and Outlook}
\label{sec:section7}
In this paper, we derived a local smoothing preconditioner, which automatically picks smoothing parameters such that the symbol of the inverse Hessian is approximated. This preconditioner has been derived by determining the analytic symbol when choosing the Stokes equations as flow constraints. The resulting coefficients of the symbol $\beta_1$ and $\beta_2$, which we called scaling parameters, have been compared to the numerical Hessian response. The presented technique to determine the scaling parameters numerically showed good agreements with the analytic results for flows with a Reynolds number of one. As convective forces become dominant, the parameter $\beta_1$ looses validity, however $\beta_2$ coincides with the analytic calculation. Standard Hessian manipulations of approximate Newton method have been used to obtain a sufficiently positive definite preconditioner. A computationally cheap preconditioner, which mimics the symbol of the Hessian, has been constructed by using differential operators. The derived method can be interpreted as Sobolev smoothing, which automatically picks a local smoothing parameter such that the symbol of the Hessian is approximated. Comparing the new method with Sobolev smoothing, we see that we obtain a faster convergence to the optimal design. By making use of a local smoothing parameter, which depends on the position of the optimization patch $\Gamma_o$, the method is able to turn off smoothing in physically meaningful areas such as the front and rear of the cylinder. 

A question that one could focus on in future work is how to determine the scaling parameter $\beta_1$ in the case of a convective flow. Setting $\beta_1$ to a smoothed version of $\beta_2$ led to an acceleration of the optimization, however this choice was based on problem dependent numerical investigations, which might not hold for further applications. Furthermore, one needs to check the validity of the Hessian symbol at non-smooth parts of the optimization patch, as the symbol has been derived for smooth geometries. A construction of further preconditioners making use of the derived Hessian symbol is possible. Here, one should look at the construction of a preconditioner with pseudo-differential properties to further improve the search direction.


\bibliography{Paper}

\end{document}